\newtheorem{theorem}{Theorem}
\newtheorem{lemma}{Lemma}
\newtheorem{definition}{Definition}
\newtheorem*{proof}{Proof:}
\newtheorem{remark}{Remark}
\newtheorem{hypothesis}{Hypothesis}
\newtheorem{proposition}{Proposition}
\title{ Existence and Stability Theory of a Neurologically-Inspired Parabolic PDE Model with a Nonlinear Time-Delayed Boundary Condition}
\date{}
\author[1, a]{Gangadhara Boregowda}
\author[1, b]{Michael R. Lindstrom}
\affil[1]{School of Mathematical and Statistical Sciences,  The University of Texas Rio Grande Valley, 
Edinburg, TX, USA.}
\affil[a]{
\texttt{gangadhara.boregowda@utrgv.edu}}
\affil[b]{
\texttt{mike.lindstrom@utrgv.edu}}
\begin{document}
\maketitle
In this paper, we establish the existence of a positive, bounded solution for a class of parabolic partial differential equations with nonlinear boundary conditions, where the boundary conditions depend on the solution on the boundary at a time $\tau \geq 0$ in the past. These equations model the production dynamics of a protein species by a single cell, where a feedback mechanism downregulates the protein's production. Furthermore, we analyze the stability of a non-trivial steady-state solution and provide sufficient conditions on the nonlinearity parameter, boundary flux, and time delay that ensure the occurrence of a Hopf bifurcation.

\textbf{keywords:} Time delay,  Hopf bifurcation, Nonlinear PDE, Stability.

\section{Introduction}
Mathematical modeling of biological systems is an important area of research. The systems themselves are often very complex, obscuring the individual effects of isolated mechanisms, and requiring costly and lengthy experiments. By carefully considering relevant mechanisms, a mathematical model can be built to combine those that are most relevant in order to gain insight into the larger system.

Partial Differential Equations (PDEs) are fundamental tools in mathematical modeling, particularly for systems influenced by both spatial and temporal variables. They find applications across various disciplines, including bioheat transfer for modeling thermal processes in biological tissues \cite{Pennes1948}, population dynamics \cite{holmes1994}, the Bhatnager-Gross-Krook (BGK) model in the kinetic theory of gases \cite{tiwari2020}, and, notably, the Navier-Stokes equations for fluid flows \cite{bertozzi2001}. A major challenge in PDE-based modeling lies in establishing the existence, uniqueness, and stability of solutions. In practice, especially in engineering \cite{chung2002}, PDE-based models are often analyzed numerically under the assumption that a solution exists within a defined solution space.

Nonlinear time delayed PDEs play a significant role in modeling various real-world phenomena, including compartmental systems \cite{mahaffy1984,pao1987, wu2012}, size-structured models, and species population models \cite{magal2018}, among others. The study of such equations is crucial for understanding the underlying systems and developing numerical methods to approximate their physical solutions. Recently,  reaction diffusion equations with nonlinear boundary conditions have garnered increasing attention, with various aspects of this topic being extensively studied \cite{guo2021, guo2023, arrieta1999}. The existence of solutions to nonlinear delayed PDEs has been explored in mathematical literature \cite{pao2002, magal2018, travis1974}. For instance, the framework of abstract parabolic equations in Banach spaces has proven effective in addressing this problem \cite{magal2018, brezis2011}. Notably, C.V. Pao advanced the study of nonlinear parabolic and elliptic PDEs by employing the upper and lower solution method, as detailed in \cite{pao2012, pao2002, pao2007}. This method offers a robust and systematic approach to proving the existence of classical solutions for nonlinear parabolic PDEs. 

An essential aspect of time-dependent systems is understanding how the solution changes over time. The parameters of these systems play a critical role in shaping the solution's behavior throughout its evolution, highlighting the importance of identifying the conditions that govern their long-term dynamics. In recent decades, the interaction between diffusion and time delay has been thoroughly studied \cite{chen2012, chen2018, guo2015, guo2021oscillatory}, with delay-induced Hopf bifurcations serving as a key tool for understanding the dynamics of real-world systems. Patrick Guidotti and Sandro Merino \cite{guidotti1997} investigated  Hopf bifurcations in reaction-diffusion equations with nonlinear boundary conditions by analyzing the spectrum of the corresponding eigenvalue problem. Similarly, Hui et al. \cite{hui2022} extended this analysis to delayed reaction-diffusion equations with homogeneous Neumann boundary conditions, focusing on the stability of steady state solutions and the existence of Hopf bifurcations by examining the principal eigenvalue of an associated elliptic operator. Chaochao Li and Shangjiang Guo \cite{li2024} explored the bifurcation and stability of reaction-diffusion-advection models, identifying specific conditions on time delay under which Hopf bifurcations occur.

Building on the existing literature, this paper investigates a novel class of PDE characterized by nonlinear, time-delayed boundary conditions. The focus of this study is restricted to examining the existence and Hopf bifurcation analysis of solutions to the proposed model. The formulation of the model in a half-space domain, combined with the assumption that the initial concentration is a bounded continuous function, naturally motivates the use of Green's functions to establish the existence of solutions. 

The structure of the paper is as follows: Section 2 presents the background of the model and outlines the contributions of the study. Section 3 introduces the preliminaries and key definitions. The main results, including existence, uniqueness, Hopf bifurcations, and numerical simulations, are detailed in Section 4. Finally, the conclusions and potential directions for future work are discussed in Section 5. Additional supporting results are provided in the Appendix.
\section{Model equation and background}
In this paper, we concern ourselves with a novel type of delayed partial differential equation, whereby the prescribed flux at a boundary is a nonlinear function of the solution at the boundary in the past. An example of this sort of model was previously studied in our group \cite{miller2024} in the context of a prion disease. In prion diseases, the misfolded scrapie prion protein (\(\textnormal{PrP}^\text{Sc}\)) accumulates in the extracellular space and interacts with the normal cellular prion protein (\(\textnormal{PrP}^\text{C}\)) produced by neurons, converting \(\textnormal{PrP}^\text{C}\) into \(\textnormal{PrP}^\text{Sc}\). The excessive accumulation of this toxic protein triggers the Unfolded Protein Response (UPR), a cellular stress mechanism that slows or halts the synthesis of the normal prion protein. This reduces the rate \(\textnormal{PrP}^\text{Sc}\) is created, allowing it to be cleared over time. Once the toxic protein is cleared, the production of the healthy prion protein resumes. This feedback process can induce oscillations in protein concentrations \cite{miller2024}.
For this paper, we create a simpler Toy Model, to study the basic structure of such in the  system. We set our focus on a single cell that produces a particular protein species. This species itself downregulates its own production with a delay. More precisely, we consider a one-dimensional model with the cell membrane at position $x=0$ and interstitial fluid occupying $x>0$. Then, for protein concentration $P(x,t)$ at positions $x\geq 0$ and times $t\geq 0$, we consider the following system:  
\begin{align}
    P_t&=D P_{xx} - k P, \label{eq:P} \\
    P(\infty,t)&=0, \label{eq:Pinfty} \\
    D \frac{\partial P}{\partial x}\Big|_{x=0}&=\frac{-A}{1+(P(0,t-T_0)/P_0)^m}, \label{eq:Pflux} \\
    P(x,0)&=F(x), \label{eq:Pic} \\
    P(0,t)&=G(t), \quad t \in [-T_0,0]. \label{eq:Phist}
\end{align}
In this model Eq. \eqref{eq:P} models the protein concentration as a reaction diffusion equation with diffusivity $D>0$ and clearance rate $k>0$; Eq. \eqref{eq:Pinfty} imposes that the concentration goes to zero in the far-field; Eq. \eqref{eq:Pflux} has the flux depending on the concentration at a time $T_0 \geq 0$ in the past, where $A>0$ is the maximum possible flux, $P_0>0$ is a protein-sensitivity switch threshold, and the nonlinearity parameter $m>0$ governs the speed of the switch; the initial concentration is given by Eq. \eqref{eq:Pic}; and the history function of the boundary is given by Eq. \eqref{eq:Phist}. Note that for $P<P_0$, resp., $P>P_0$, for large $m$, the protein flux into the domain is $\approx A$, resp., $\approx 0$. Through a standard nondimensionalization, we arrive at our system
\begin{align}
   \label{eq:Mod1a} q_t&=q_{xx} - q, \\
    \label{eq:Mod1b} q(\infty,t)&=0, \\
   \label{eq:Mod1c} q_x(0,t)&=\frac{-\alpha}{1+q(0,t-\tau)^m}, \\
    \label{eq:Mod1d} q(x,0)&=f(x), \\
   \label{eq:Mod1e}  q(0,t)&=h(t) \quad t \in [-\tau,0],
\end{align}
where $\tau=kT_0$ is the dimensionless delay parameter, $\alpha = \frac{A}{P_0 \sqrt{Dk}}$ is the dimensionless flux parameter; $q$, $f$, and $h$ are dimensionless renditions of $P$, $F$, and $G$; and $x$ and $t$ are now dimensionless.

We assume that 
\begin{hypothesis}\label{hy:gen} 
(General hypothesis)
 \begin{enumerate}
     \item  $\forall x \in [0, \infty)$,  $f(x)\geq0$ and $f(x) \in \mathcal{C}\left([0,\infty)\right) \bigcap \mathcal{L}_{\infty}(\left[0,\infty)\right)$;
     \item $\forall t \in [-\tau, 0]$, $h(t)\geq 0$ and $h(t) \in \mathcal{C}\left([-\tau,0]\right)$;
     \item $h(0)=f(0)$ (compatibility condition); and
     \item $\alpha >0$, $m>0$ and $\tau\geq 0$.
 \end{enumerate}
\end{hypothesis}
This paper investigates the existence and uniqueness of classical solutions to the model represented by Eqs. \eqref{eq:Mod1a}--\eqref{eq:Mod1e}, where the flux at the membrane depends on either the past value at the membrane ($\tau > 0$) or the present value at the membrane ($\tau = 0$). Since $m>0$, the flux boundary condition in Eqs. \eqref{eq:Mod1a}--\eqref{eq:Mod1e} is nonlinear in $q(0,t-\tau)$, which significantly increases the model's complexity. We establish the existence of a positive, bounded solution and the asymptotic behavior of the solution to Eqs. \eqref{eq:Mod1a}--\eqref{eq:Mod1e} by considering the cases $\tau>0$ and $\tau=0$. When $\tau>0$, Eqs. \eqref{eq:Mod1a}--\eqref{eq:Mod1e} become linear in the interval $[0, \tau]$ because the flux boundary is known. To prove the existence of the solution in this case, we employ Theorem 1.2 from Chapter 7 of \cite{pao2012} and the ladder argument. When $\tau=0$, Eqs. \eqref{eq:Mod1a}--\eqref{eq:Mod1e} remain nonlinear, and we apply the method of lower and upper solutions to demonstrate the existence and uniqueness of the solution.  This approach provides a powerful framework for analyzing complex nonlinear systems by constructing two bounding sequences that converge to the actual solution of the PDE. Under Hypothesis~\ref{hy:gen}, we perform a bifurcation analysis of the steady state solution of Eqs. \eqref{eq:Mod1a}--\eqref{eq:Mod1e}, using the time delay as the bifurcation parameter. Additionally, we derive conditions on $m$ and $\alpha$ that ensure the occurrence of a Hopf bifurcation induced by the time delay. The key results of this paper and their respective contributions are summarized in Table ~\ref{tb:contr}. The global constants referenced in this study are summarized in Table~\ref{tb:notation} for clarity and ease of reference.

\begin{table}[h]
\caption{Main results and contributions}
\label{tb:contr}
\centering
\setlength{\tabcolsep}{13pt}
\begin{tabular}{|p{3cm}|p{11cm}|}
\hline
\textbf{Result} & \textbf{Contribution} \\
\hline
Propositions \ref{pr:lusol1} and \ref{pr:lusol2} & Establish lower $(\check{q})$ and upper $(\hat{q})$ solutions for Eqs.~\eqref{eq:Mod1a}--\eqref{eq:Mod1e} when $\tau=0$, forming the foundation for subsequent analysis. \\
\hline
Lemma \ref{le:monot} & Proves the monotonicity of the iterated sequence, a critical property used to confirm the existence of solutions when $\tau=0$. \\
\hline
Theorem \ref{th:ExUnNde} & Provides a formal proof for the existence of solutions to Eqs.~\eqref{eq:Mod1a}--\eqref{eq:Mod1e} under the condition $\tau=0$. \\
\hline 
Theorem \ref{th:UETM} & Demonstrates the existence of solutions for Eqs.~\eqref{eq:Mod1a}--\eqref{eq:Mod1e} when $\tau>0$. \\
Theorem \ref{th:Glob} & Proves the uniqueness of the solution to Eqs.~\eqref{eq:Mod1a}--\eqref{eq:Mod1e}, ensuring that the problem is mathematically well-posed. \\ 
\hline
Theorem \ref{th:hobi} & Provides sufficient conditions for the occurrence of Hopf bifurcation, revealing insights into the stability and periodicity of solutions. \\
\hline
\end{tabular}
\end{table}

\begin{table}[h]
\caption{List of global notations and their descriptions}
\label{tb:notation}
\centering
\setlength{\tabcolsep}{13pt}
\begin{tabular}{|p{3cm}|p{11cm}|}
\hline
\textbf{Notation} & \textbf{Description} \\
\hline
$\alpha, m, \tau$ & System parameters: maximum flux, nonlinearity, and delay parameters \\
\hline
$M$ & $\sup \{f(x): x\in \Omega\}$ \\
\hline
$c$ & Positive real root of $c + c^{m+1} - \alpha = 0$, for given $\alpha, m > 0$ \\
\hline
$\Gamma$ & Lipschitz constant for the function $g(0,t,q): [0, T] \times \langle \check{q}, \hat{q} \rangle \rightarrow \mathbb{R}$ defined by $g(0,t,q) = \frac{-\alpha}{1 + [q(0,t)]^m}$ \\
\hline
$Q$ & $Q = \frac{\alpha m c^{m-1}}{(1 + c^{m})^2}$, which emerges during the linearization process and plays a crucial role in ensuring the stability of the solution \\
\hline
$T$ & Indicates maximum interval of existence of solution: when $T < \infty$, the interval is $[0, T]$; when $T = \infty$, it is $[0, \infty)$ \\
\hline
\end{tabular}
\end{table}

\section{Theoretical background}
In this section, we introduce the key concepts necessary for analyzing Eqs. \eqref{eq:Mod1a}--\eqref{eq:Mod1e}: the domain of interest, solution spaces, definitions of upper and lower solutions, stability, and properties of Green's functions. These foundational tools are crucial for the forthcoming analysis. Readers already familiar with these concepts may proceed directly to the next section  

\vspace{1cm}
To describe the system we consider $\Omega:=[0, \infty)$ with the boundary (cell membrane) $\partial \Omega:=\{0\}$. For any time $T>0$, we set 
\begin{equation*}
    \Omega_T= \Omega \times (0, T) , \  \partial \Omega_T^{*}=  \left(\Omega \times \{0\}\right) \cup \left(\partial \Omega \times (0,T)\right), \ \text{and} \ H_T=\{0\} \times [-\tau, 0].
\end{equation*}
We use the following spaces in our analysis
\begin{itemize}
    \item Let $\mathcal{C}(\Omega_T)$ denote the set of all continuous functions on $\Omega_T$
    \item Let   $\mathcal{C}^{2,1}(\Omega_T)$  denote the set of all functions that are twice continuously differentiable in space and once continuously differentiable in time 
    \item The function sector for $f_1(x,t)$ and $f_2(x,t)$ is defined as  
    \begin{equation*}
        \langle f_1,f_2 \rangle=\{f \in \mathcal{C}^{2,1}\left(\Omega_{T}\right):f_1\leq f \leq f_2\} 
    \end{equation*}
\end{itemize}
\subsection{Upper and lower solutions}
To establish the existence of solutions for nonlinear partial differential equations, we employ the method of upper and lower solutions. Given the unbounded nature of the domain, we incorporate a growth condition in the formulation, as outlined in \cite{pao2012}. The exact definition can be applied in a bounded domain without imposing a growth condition \cite{pao2007}.

\begin{definition}
    A function $\hat{q} \in \mathcal{C}^{2,1}\left(\Omega_{T}\right)$ is called an upper solution of Eqs. \eqref{eq:Mod1a}--\eqref{eq:Mod1d} with $\tau=0$ if it satisfies the inequalities
   \begin{align}
\label{eq:uppera}\hat{q}_{t}-\hat{q}_{x x} +\hat{q}&\geq 0,  \\
\label{eq:upperb} \hat{q}( \infty,t) & \geq 0,\\
\label{eq:upperc}\hat{q}_{x}(0,t) & \geq \frac{-\alpha}{1+[\hat{q}(0, t)]^{m}}, \\
\label{eq:upperd}\hat{q}(x,0) & \geq f(x), 
\end{align}
and the growth condition 
\begin{equation}\label{eq:growth}
    |\hat{q}(x,t)|\leq C' \exp{(b'x^2)} \ \ \text{as} \ \ x\rightarrow \infty,
\end{equation}
for some constants $C'>0$ and  $b' \geq 0$.
Similarly, $\check{q} \in \mathcal{C}^{2,1}\left(\Omega_{T}\right)$ is called a lower solution if it satisfies the reversed inequalities in Eqs. \eqref{eq:uppera}--\eqref{eq:upperd} and the growth condition in Eq. \eqref{eq:growth} with the same or some other smaller constants $C'$ and $b'$.
\end{definition}
\begin{definition}
    The functions $\check{q}$ and $\hat{q}$ are called ordered lower and upper solutions if $\check{q} \leq \hat{q}$ in $\Omega_T.$
\end{definition}
\subsection{ Green's functions}\label{se:greenfun}
The following theorem is helpful to show the existence of  solutions to Eqs. \eqref{eq:Mod1a}--\eqref{eq:Mod1e}, which incorporates a Green's function. Readers may refer to the literature \cite{pao2012} (Chapter 7, Theorem 1.2) for detailed proof and further generalizations. 

\begin{theorem}[\cite{pao2012}, Theorem 7.1.2] \label{th:Main}
    Let $h(x, t)$ and  $u_0(x)$ be continuous functions and satisfy
    \begin{equation}\label{eq:grocon}
        |h(x,t)|\leq A_0 \exp{(b_0 x^2)} \ \text{and} \ |u_0(x)| \leq A_1 \exp{(b_1 x^2)}  \ \text{as} \ |x| \rightarrow \infty
    \end{equation}
    for some constants $A_0, A_1>0$ and $b\geq0$. Then the function 
    \begin{align}
    u(x,t)=\int_{0}^{\infty}G(x,t;\xi, 0)u_0(\xi) \mathrm{d}\xi+\int_{0}^{t}G(x,t;0,s)h(0, s)\mathrm{d}s,
\end{align}
 where
 
\begin{align}
G(x, t ;\xi, s)= & \frac{\exp (-t)}{\sqrt{4\pi(t-s)}}
\left[\exp \left(\frac{-(x-\xi)^2}{4(t-s)}\right)+\exp \left(\frac{-(x+\xi)^2}{4(t-s)}\right)\right]
\end{align}
 is the  solution to  the following system 
\begin{align}
   \nonumber u_t&=u_{xx} - u \ \ \ \forall (x,t) \in \Omega_T, \\
   \nonumber u_x(x,t)&=h(x,t) \ \ \ \forall t \in [0, T] \quad \text{and} \quad x\in \partial \Omega, \\
    \nonumber u(x,0)&=u_0(x) \ \ \ \forall x \ \in \Omega, 
\end{align}
where $T< \frac{1}{4b}$. Moreover, there exist constants $A_2, b_2$ such that
\begin{equation}
    |u(x,t)|\leq A_2 \exp{(b_2x^2)} \quad as \ x\rightarrow \infty.
\end{equation}
\end{theorem}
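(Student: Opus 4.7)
The plan is to reduce to a pure heat equation, build the Neumann Green's function by reflection, and then verify the initial and boundary data via classical jump-relation arguments. First, I would introduce the substitution $u(x,t)=e^{-t}v(x,t)$; a direct computation shows that $v$ satisfies the pure heat equation $v_t=v_{xx}$ with Neumann data $v_x(0,t)=e^{t}h(0,t)$ and initial data $v(x,0)=u_0(x)$. This removes the reaction term $-u$ and explains why the factor $\exp(-(t-s))$ multiplies the free-space heat kernel in the statement of $G(x,t;\xi,s)$.

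Next, I would construct the half-line Neumann Green's function by the method of images. Starting with the free-space fundamental solution $\Gamma(x,t)=(4\pi t)^{-1/2}\exp(-x^2/(4t))$, the even reflection across $x=0$ produces $G_N(x,t;\xi,s)=\Gamma(x-\xi,t-s)+\Gamma(x+\xi,t-s)$, whose $x$-derivative vanishes identically at the boundary. Multiplying by $e^{-(t-s)}$ recovers the Green's function written in the theorem. Using this kernel, Duhamel's principle yields the representation formula: the initial data is propagated by $\int_0^\infty G(x,t;\xi,0)u_0(\xi)\,d\xi$, and the boundary flux is lifted via the convolution $\int_0^t G(x,t;0,s)h(0,s)\,ds$, where the fact that $G(\cdot,\cdot;0,s)=2\Gamma(\cdot,\cdot-s)\,e^{-(\cdot-s)}$ already contains the correct factor of $2$ coming from the image charge.

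The main technical step, and the real obstacle, is verifying that $u_x(0,t)=h(0,t)$. Because $\partial_x G(x,t;0,s)|_{x=0}=0$ pointwise for $s<t$, naively differentiating under the integral sign gives zero; the boundary data is recovered only through the boundary-layer singularity at $s=t$. I would isolate a small interval $s\in(t-\varepsilon,t)$ and use the Gaussian concentration of $\partial_x\Gamma(x,t-s)$ as $(x,t-s)\to(0,0)$ together with the continuity of $h(0,\cdot)$ to obtain the jump relation $\lim_{x\to 0^+}\partial_x\!\int_{t-\varepsilon}^t G(x,t;0,s)h(0,s)\,ds=h(0,t)$, while the complementary integral tends to zero as $x\to 0^+$. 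A parallel argument on the initial layer (using continuity of $u_0$) verifies $u(x,0)=u_0(x)$, and termwise differentiation away from the boundary and the initial time shows that $u$ satisfies $u_t=u_{xx}-u$ in $\Omega_T$.

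Finally, the growth estimate follows by inserting the hypothesized bounds $|u_0(\xi)|\leq A_1\exp(b_1\xi^2)$ and $|h(0,s)|\leq A_0\exp(b_0\cdot 0)$ into the representation. The initial-data integral is a Gaussian convolution, which converges and stays bounded by $A_2\exp(b_2 x^2)$ provided $4b_1 t<1$, i.e.\ $t<1/(4b_1)$; this is exactly where the restriction $T<1/(4b)$ enters. The boundary integral is immediately controlled by the Gaussian decay of $G(x,t;0,s)$ in $x$ and contributes a term that is in fact bounded uniformly in $x$, so it is absorbed into the same exponential envelope. Putting these pieces together gives the claimed bound and completes the proof.
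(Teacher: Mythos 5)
The paper offers no proof of this result at all: it is imported wholesale from Pao (Theorem 7.1.2 of Chapter 7), and Pao's own argument is precisely the classical construction you describe --- reduction to the heat equation, the reflected (image) kernel, Duhamel's representation, jump relations for the single-layer potential, and growth estimates in the Tychonoff class $T<1/(4b)$. So your route coincides with the source's, and the architecture is sound.

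One step, however, would fail as literally written. The classical jump relation for the doubled half-line kernel is
\begin{equation*}
\lim_{x\to 0^{+}}\partial_x\int_0^t \frac{2}{\sqrt{4\pi(t-s)}}\exp\Bigl(\frac{-x^2}{4(t-s)}\Bigr)\varphi(s)\,\mathrm{d}s=-\varphi(t),
\end{equation*}
not $+\varphi(t)$: the integrand $\partial_x\Gamma(x,t-s)$ is negative for $x>0$, and the substitution $r=x/\sqrt{4(t-s)}$ turns the integral into $-\pi^{-1/2}\int e^{-r^2}\,\mathrm{d}r$ concentrated near $s=t$. With the plus sign in the stated representation you would therefore verify $u_x(0,t)=-h(0,t)$ rather than $h(0,t)$; either the boundary convolution must carry a minus sign or your claimed limit must. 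Relatedly, your reduction $u=e^{-t}v$ produces the factor $e^{-(t-s)}$ in the boundary convolution --- as you say in prose --- whereas the kernel displayed in the theorem carries $e^{-t}$; these differ by $e^{-s}$ inside the $s$-integral, and only one normalization satisfies $u_t=u_{xx}-u$ together with the stated flux condition. Both points are very likely transcription slips in the paper's rendering of Pao's theorem rather than defects in your strategy, but a complete proof must pin down the sign and the exponential factor, since the sign of the recovered flux is exactly what the iteration scheme and the Hopf analysis later depend on.
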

When $h(x,t)$ and $u_0(x)$ are uniformly bounded, the growth condition Eq. \eqref{eq:grocon} holds with $b_0=0$, $b_1=0$ and the solution $u(x,t)$ is bounded. Moreover, $u(x,t)\rightarrow 0$ as $x\rightarrow \infty$.

The regularity of the solution $u(x,t)$ is discussed in \cite{pao2012} (Chapter 2 and 7, Lemma 1.1) under the regularity assumptions on $u_0(x)$ and $h(x,t)$. In the subsequent sections, we utilize the following properties of  Green's functions to facilitate further analysis. For fixed  $t>s>0$, we have 
    \begin{align}\label{eq:Greeid}
        \int_{0}^{\infty}G(x,t; \xi, 0) \mathrm{d}\xi \leq K < \infty \ \text{and} \ \int_{0}^{t}G(x,t; 0, s) \mathrm{d}s \leq C t\exp{(-t)},
    \end{align}
for some constants $K, C>0$.  The inequality above is derived in Appendix~\ref{ap:ap4}. For broader classes of inequalities, readers are encouraged to consult \cite{pao2007, evans2022,friedman2008}.
\subsection{Stability}
 A fundamental question in this problem is whether, as time progresses, the density function within the system stays close to a steady state and whether it converges to this steady state as $t\rightarrow \infty$. This raises the issue of  stability and asymptotic stability of a steady state solution, along with the determination of its region of stability.
\begin{definition}
    A steady state solution $q_s(x)$ of Eqs. \eqref{eq:Mod1a}--\eqref{eq:Mod1e} is said to be stable if given any constant $\epsilon >0$ there exists a $\delta >0$ such that 
    \begin{equation}\label{eq:sta}
        |q(x,t)-q_s(x)|\leq \epsilon \quad \forall (x,t) \in \Omega_T \  \ \text{whenever} \ |q_s(x)-f(x)|\leq \delta \quad \forall x \in  \ \Omega.
    \end{equation}
    If, in addition, 
    \begin{equation}\label{eq:asysta}
        \lim_{t\rightarrow \infty}|q(x,t)-q_s(x)|=0 \quad \forall x \in \Omega.
    \end{equation}
    Then $q_s(x)$ is said to be asymptotically stable. 
\end{definition}

\begin{definition}
    The set of initial functions $f(x)$ whose corresponding solutions $q(x,t)$ satisfy Eqs. \eqref{eq:sta}--\eqref{eq:asysta} is called stability region of $q_s(x)$. If this is true for
all the initial functions, then $q_s(x)$ is said to be globally asymptotically stable.
\end{definition}

\subsection{Hopf bifurcations }

The behavior of a solution is significantly influenced by the parameters of the system, making it crucial to determine the conditions under which these parameters dictate the long-term dynamics. In systems with delay, time delay often induces Hopf bifurcations near the steady state solution \cite{li2024,hui2022}. To understand the stability of steady-state solutions in such nonlinear problems, the method of linearized stability is commonly employed \cite{guidotti1997,li2024, hui2022}. This approach involves examining the spectrum of the associated linearized eigenvalue problem to predict and characterize the behavior of the steady state solutions. A critical condition for the occurrence of a Hopf bifurcation in systems with a time delay $\tau>0$ is the presence of a pair of complex conjugate eigenvalues crossing the imaginary axis of the complex plane. This phenomenon signifies a transition in the stability of the steady state solutions, where the real part of the eigenvalues changes sign, leading to the emergence of oscillatory behavior \cite{hui2022}.

\section{Main results}
In this section, we establish the following results for Eqs. \eqref{eq:Mod1a}--\eqref{eq:Mod1e}:

\begin{itemize} \item The existence of a positive, bounded solution to Eqs. \eqref{eq:Mod1a}--\eqref{eq:Mod1e}
 with $\tau=0$, given a specific class of initial conditions (see Hypothesis~\ref{hy:ini}) 
 \item The existence of a bounded solution to Eqs. \eqref{eq:Mod1a}--\eqref{eq:Mod1e} with $\tau>0$ \item The global uniqueness of the solution to Eqs. \eqref{eq:Mod1a}--\eqref{eq:Mod1e}
 subject to assumptions in Hypothesis~\ref{hy:uni}
 \item The Hopf bifurcation analysis by treating $\tau$ as a bifurcation parameter
 \item Numerical examples
 \end{itemize}

 \subsection{Existence and uniqueness}
First, we demonstrate the existence of positive, bounded solutions to Eqs. \eqref{eq:Mod1a}--\eqref{eq:Mod1e} in the absence of time delay

\begin{align}
   \label{eq:ModNodela} q_t&=q_{xx} - q, \ (x, t) \in \Omega_T, \\
    \label{eq:ModNodelb} q(\infty,t)&=0,  \ t\in[0,T], \\
   \label{eq:ModNodelc} q_x(0,t)&=\frac{-\alpha}{1+[q(0,t)]^m},  \ \ t\in[0,T], \\
    \label{eq:ModNodeld} q(x,0)&=f(x), \ x\in \Omega, 
\end{align}
for any $0<T<\infty$. 

In practice, the initial concentration is positive near the cell membrane and decays in the far field. This phenomenon is true in many time evolution problems. With this observation,  we assume $f(x)$ as in Hypothesis~\ref{hy:ini}, which helps us define a positive lower solution.  
\begin{hypothesis}\label{hy:ini} For given parameters $\alpha$, $m$ and  initial concentration $f(x)$, there exists $\gamma\geq 2$, $\beta>1$ and $\zeta\geq \sqrt{\beta \gamma (\gamma-1) +1}$ such that $c\exp{(-\zeta x-\beta x^\gamma)} \leq f(x) \ \forall x \in \Omega,$ where $c+c^{m+1}=\alpha$.
\end{hypothesis}

In many cases \cite{pao2002,pao2012}, the zero function is a natural candidate for a lower solution if $f(x)\geq 0$. However, in this particular problem, the negative boundary condition in Eq. \eqref{eq:ModNodelc} eliminates the zero function as a viable lower solution. This necessitates identifying an alternative function that satisfies the required conditions for a lower solution under the given boundary constraints.

The following propositions are straightforward to verify (See Appendix~\ref{ap:ap5}) 
\begin{proposition}\label{pr:lusol1}
    Assume that Hypotheses \ref{hy:gen} and \ref{hy:ini} hold. Then, 
    \begin{equation}
        \check{q}(x,t)=c \exp{(-\zeta x-\beta x^\gamma)} 
    \end{equation}
    is a lower solution to Eqs. \eqref{eq:ModNodela}--\eqref{eq:ModNodeld}, where $c, \zeta, \beta$ and $\gamma$ are defined in Hypothesis~\ref{hy:ini}.  And, 
    \begin{equation}
        \hat{q}(x,t)=M, \quad \text{where} \quad M:=\sup_{x\in \Omega} f(x)
    \end{equation}
    is an upper solution to Eqs. \eqref{eq:ModNodela}--\eqref{eq:ModNodeld}. Moreover,  $\check{q}$ and $\hat{q}$ are ordered lower and upper solutions, i.e. $\check{q}\leq \hat{q}$.
\end{proposition}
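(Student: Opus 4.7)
The plan is to verify directly each of the pointwise inequalities in the definition of upper/lower solutions for $\hat{q}$ and $\check{q}$, together with the growth condition and the ordering $\check{q}\le\hat{q}$.

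The upper solution $\hat{q}(x,t)=M$ is dispatched in a few lines. All its derivatives vanish, so the parabolic inequality reads $M\ge 0$, which holds because $f\ge 0$ forces $M\ge 0$. The far-field condition $\hat{q}(\infty,t)=M\ge 0$ and the initial condition $\hat{q}(x,0)=M\ge f(x)$ hold by definition of $M$. The flux inequality $\hat{q}_x(0,t)=0\ge -\alpha/(1+M^m)$ is immediate since $\alpha>0$, and the growth condition is trivial with $C'=M$, $b'=0$.

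Turning to $\check{q}(x,t)=c\exp(-\zeta x-\beta x^\gamma)$, the initial inequality is exactly Hypothesis~\ref{hy:ini}, the far-field inequality holds because $\check{q}\to 0$ as $x\to\infty$, and the growth condition is trivial since $\check{q}$ is uniformly bounded. For the boundary flux, differentiation gives $\check{q}_x(0,t)=-c\zeta$ (the $\beta\gamma x^{\gamma-1}$ term vanishes at $x=0$ because $\gamma\ge 2$), and the required reversed inequality $-c\zeta\le -\alpha/(1+c^m)$ rearranges to $\zeta(c+c^{m+1})\ge \alpha$. Since $c$ is defined by $c+c^{m+1}=\alpha$, this reduces to $\zeta\ge 1$, which holds because $\zeta\ge\sqrt{\beta\gamma(\gamma-1)+1}\ge 1$. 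The ordering $\check{q}\le\hat{q}$ follows from $\check{q}(x,t)\le c=\check{q}(0,0)\le f(0)\le M$, where the middle inequality is Hypothesis~\ref{hy:ini} evaluated at $x=0$.

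The technical core is the parabolic inequality $\check{q}_t-\check{q}_{xx}+\check{q}\le 0$. Writing $\check{q}=c\,e^{\phi}$ with $\phi(x)=-\zeta x-\beta x^\gamma$ and noting $\check{q}_t=0$, I compute $\check{q}_{xx}=c(\phi''+(\phi')^2)e^\phi$, so after dividing by $c e^\phi>0$ the inequality becomes
\begin{equation*}
1+\beta\gamma(\gamma-1)x^{\gamma-2}\le \zeta^2+2\zeta\beta\gamma x^{\gamma-1}+\beta^2\gamma^2 x^{2\gamma-2}.
\end{equation*}
Using $\zeta^2\ge 1+\beta\gamma(\gamma-1)$, it suffices to show
\begin{equation*}
\beta\gamma(\gamma-1)\bigl(x^{\gamma-2}-1\bigr)\le 2\zeta\beta\gamma x^{\gamma-1}+\beta^2\gamma^2 x^{2\gamma-2}.
\end{equation*}
For $x\in[0,1]$ with $\gamma\ge 2$, the left side is nonpositive (vanishing when $\gamma=2$) and the right side is nonnegative, so the inequality is clear. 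For $x>1$, dividing by $\beta\gamma x^{\gamma-2}>0$ reduces it to $(\gamma-1)(1-x^{-(\gamma-2)})\le 2\zeta x+\beta\gamma x^\gamma$; the left side is bounded above by $\gamma-1$, while the right side exceeds $2\zeta+\beta\gamma\ge 2+\gamma$ using $\zeta\ge 1$, $\beta>1$, $x>1$. This case-split is the only step requiring genuine algebraic thought, and it is where I expect the main obstacle, though the hypothesis $\zeta\ge\sqrt{\beta\gamma(\gamma-1)+1}$ appears tailor-made for it.
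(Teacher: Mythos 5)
Your proof is correct and follows essentially the same direct-verification route as the paper's Appendix E: compute $\check q_x$, $\check q_{xx}$, reduce the flux inequality to $\zeta\ge 1$ via $c+c^{m+1}=\alpha$, and check the parabolic inequality using $\zeta^2\ge\beta\gamma(\gamma-1)+1$. Your explicit case split $x\in[0,1]$ versus $x>1$ supplies the algebraic detail that the paper compresses into a single assertion, and your ordering argument $\check q\le c\le f(0)\le M$ correctly fills in a step the appendix leaves implicit.
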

\begin{proposition}\label{pr:lusol2}
 The lower and upper solution satisfies the growth condition 
\begin{align}
    |\check{q}(x,t)|&\leq c \exp{(bx^2)}, \\
    |\hat{q}(x,t)|&\leq M \exp{(bx^2)},
\end{align}
 for any constant $b\geq 0$.   
\end{proposition}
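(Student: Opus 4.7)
The strategy is to reduce both inequalities to the elementary observation that $\exp(bx^{2}) \geq 1$ for all $x \geq 0$ and $b \geq 0$. Once this is noted, it suffices to bound each candidate by its leading prefactor ($c$ for the lower solution, $M$ for the upper solution), after which multiplying by $\exp(bx^{2})$ on the right preserves the inequality.

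For the upper solution, the proof is immediate: $\hat{q}(x,t) \equiv M$ is a nonnegative constant, so $|\hat{q}(x,t)| = M \leq M\exp(bx^{2})$ for every $x \in \Omega$ and every $b \geq 0$, with no estimation required.

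For the lower solution, I would first observe that on $\Omega=[0,\infty)$ the exponent $-\zeta x - \beta x^{\gamma}$ is non-positive, since Hypothesis~\ref{hy:ini} forces $\zeta > 0$, $\beta > 1 > 0$, and $\gamma \geq 2$, while $x \geq 0$. Hence $\check{q}(x,t) = c\exp(-\zeta x - \beta x^{\gamma}) \leq c$, and since $c > 0$ and $\exp(bx^{2}) \geq 1$, we have $|\check{q}(x,t)| = \check{q}(x,t) \leq c \leq c\exp(bx^{2})$, as desired.

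The main obstacle, if one can call it that, is simply checking the sign conventions and confirming that the parameters provided by Hypothesis~\ref{hy:ini} indeed make the exponent of $\check{q}$ non-positive on the whole half line. Because both candidates are uniformly bounded on $\Omega$, the Gaussian growth envelope in Eq.~\eqref{eq:growth} is automatic; indeed the stronger estimate with $b=0$ holds, and the flexibility to choose any $b \geq 0$ comes for free.
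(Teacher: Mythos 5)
Your proposal is correct and matches the paper's (implicit) argument: since both candidates are uniformly bounded on $\Omega$ ($\hat{q}\equiv M$ and $\check{q}\leq c$ because its exponent is nonpositive), the bound follows from $\exp(bx^{2})\geq 1$ for $x\geq 0$, $b\geq 0$. The paper treats this as straightforward and does not spell it out further, so there is nothing to add.
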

Let $\overline{q}^{(0)}=\hat{q}$.  For $k\geq0$, define the sequence $\{\overline{q}^{(k)}\}$ recursively by solving the following system 
\begin{align}
   \label{eq:Iterproa} \overline{q}_t^{(k+1)}&=\overline{q}^{(k+1)}_{xx} - \overline{q}^{(k+1)}, \  (x, t) \in \Omega_T, \\
    \label{eq:Iterprob} \overline{q}^{(k+1)}(\infty,t)&=0, \ t\in[0,T], \\
   \label{eq:Iterproc} \overline{q}^{(k+1)}_x(0,t)&=\frac{-\alpha}{1+[\overline{q}^{(k)}(0,t)]^m}, \ t\in[0,T], \\
    \label{eq:Iterprod} \overline{q}^{(k+1)}(x,0)&=f(x) \ x\in \Omega, 
\end{align}
Similarly, let $\underline{q}^{(0)}=\check{q}$. Define the sequence $\{\underline{q}^{(k)}\}$ using the same recursive system. Under the assumptions in Hypothesis~\ref{hy:gen} and \ref{hy:ini}, 
  Eqs. \eqref{eq:Iterproa}--\eqref{eq:Iterprod} satisfy the hypotheses of Theorem \ref{th:Main} (See Section~\ref{se:greenfun}).  Therefore, the sequences $\{\overline{q}^{(k)}\}$ and $\{\underline{q}^{(k)}\}$ exists and are well defined. In  Lemma \ref{le:monot}, we demonstrate that the sequences $\{\underline{q}^{(k)}\}$ and $\{\overline{q}^{(k)}\}$ generated by the above iterations are nondecreasing and nonincreasing, respectively.
  \begin{lemma}\label{le:monot}
     Let the sequence $\{\underline{q}^{(k)}\}$ and $\{\overline{q}^{(k)}\}$ be generated from Eqs. \eqref{eq:Iterproa}--\eqref{eq:Iterprod}. Then, the sequence $\{\underline{q}^{(k)}\}$ and $\{\overline{q}^{(k)}\}$ are nondecreasing and nonincreasing, respectively.  For each $k$, $\underline{q}^{(k)}$ and $\overline{q}^{(k)}$ are lower and upper solution of Eqs. \eqref{eq:ModNodela}--\eqref{eq:ModNodeld} and satisfy $\underline{q}^{(k)}\leq \overline{q}^{(k)}$. Moreover, the sequences $\{\underline{q}^{(k)}\}$ and $\{\overline{q}^{(k)}\}$ converge pointwise to limits $\underline{q}$ and $\overline{q}$, respectively, and 
    \begin{equation*}
        \check{q}\leq \underline{q}\leq \overline{q}\leq \hat{q}.
    \end{equation*}
\end{lemma}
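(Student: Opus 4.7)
The plan is to establish all four claims (monotonicity, ordering, the lower/upper solution property, and pointwise convergence) by a single strong induction on $k$, with the pivotal observation that the boundary flux function $g(q) := -\alpha/(1+q^{m})$ is monotonically increasing on $q \geq 0$ (since $g'(q) = \alpha m q^{m-1}/(1+q^{m})^{2} \geq 0$). Because each iterate $\overline{q}^{(k+1)}$ and $\underline{q}^{(k+1)}$ is obtained from a \emph{linear} parabolic problem whose boundary flux is known and whose initial datum is $f$, I can use the Green's function representation of Theorem \ref{th:Main} directly: for instance,
\begin{equation*}
\overline{q}^{(k+1)}(x,t) = \int_{0}^{\infty} G(x,t;\xi,0)\,f(\xi)\,\mathrm{d}\xi \;-\; \alpha\int_{0}^{t} \frac{G(x,t;0,s)}{1+[\overline{q}^{(k)}(0,s)]^{m}}\,\mathrm{d}s,
\end{equation*}
and analogously for $\underline{q}^{(k+1)}$. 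Since $G$ is nonnegative, any pointwise comparison between iterates reduces to a sign check on the integrand in the boundary term.

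The base case comes first. To see $\overline{q}^{(1)} \leq \overline{q}^{(0)} = \hat{q}$, I set $w = \hat{q} - \overline{q}^{(1)}$ and observe that the upper-solution inequalities for $\hat{q}$ give $w_{t} - w_{xx} + w \geq 0$ in $\Omega_{T}$, $w(x,0) \geq 0$, $w(\infty,t) \geq 0$, and $w_{x}(0,t) = \hat{q}_{x}(0,t) - g(\hat{q}(0,t)) \geq 0$. Invoking a comparison principle for $w_{t} - w_{xx} + w \geq 0$ on the unbounded strip---justified via the growth bounds of Proposition \ref{pr:lusol2}, or equivalently by rewriting $\hat{q} - \overline{q}^{(1)}$ as a nonnegative integral via Theorem \ref{th:Main}---then yields $w \geq 0$. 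The symmetric argument gives $\underline{q}^{(1)} \geq \check{q}$. For $\underline{q}^{(1)} \leq \overline{q}^{(1)}$, I subtract the two Green's function representations and use $\check{q} \leq \hat{q}$ along the boundary together with the monotonicity of $g$.

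The inductive step is essentially identical, with $(\overline{q}^{(k-1)}, \overline{q}^{(k)})$ playing the role of $(\hat{q}, \overline{q}^{(1)})$: if the chain $\underline{q}^{(k-1)} \leq \underline{q}^{(k)} \leq \overline{q}^{(k)} \leq \overline{q}^{(k-1)}$ holds, then comparing the Green's function representations of consecutive iterates via the monotonicity of $g$ extends the chain by one step. Verifying that each $\overline{q}^{(k)}$ is itself an upper solution reduces to checking $\overline{q}^{(k)}_{x}(0,t) = g(\overline{q}^{(k-1)}(0,t)) \geq g(\overline{q}^{(k)}(0,t))$, which holds because $\overline{q}^{(k-1)} \geq \overline{q}^{(k)}$ at the boundary and $g$ is increasing; the remaining conditions are automatic since $\overline{q}^{(k)}$ satisfies the PDE with the exact initial and far-field data, and the growth bound carries over from Theorem \ref{th:Main}. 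Analogous remarks cover $\underline{q}^{(k)}$. Pointwise convergence then follows immediately: at each fixed $(x,t)$, $\{\underline{q}^{(k)}(x,t)\}$ is a monotone increasing real sequence bounded above by $M$, while $\{\overline{q}^{(k)}(x,t)\}$ is monotone decreasing and bounded below by $\check{q}(x,t)$; the sandwich $\check{q} \leq \underline{q} \leq \overline{q} \leq \hat{q}$ is then obtained by letting $k \to \infty$ in the chain.

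The main obstacle I anticipate is the precise statement and justification of the comparison principle on the unbounded half-line with a flux (Neumann-type) boundary condition at $x = 0$. Classical maximum principles apply on bounded cylinders, so here I must either import an unbounded-domain version that exploits the growth estimates established in Proposition \ref{pr:lusol2} (using a parabolic barrier to rule out loss of the sign of $w$ at infinity), or---more cleanly---argue throughout via the explicit Green's function formula, which reduces every comparison to the positivity of $G$ and to the monotonicity of $q \mapsto 1/(1+q^{m})$ applied to already-ordered arguments. I would favor the second route, since it also immediately preserves the growth bound needed to legitimize the next application of Theorem \ref{th:Main}.
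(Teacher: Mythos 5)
Your proposal mirrors the paper's own proof closely: the paper also argues by induction, sets $w$ equal to a difference of iterates, verifies a differential inequality, an initial inequality, and a boundary-flux inequality, and closes each comparison with a Phragman--Lindelof (comparison) principle on the unbounded strip justified by the growth bounds --- exactly your route (a); your route (b) is a Green's-function variant of the same idea. However, both routes, as you have set them up, founder on a sign issue at the boundary, which is precisely the ``main obstacle'' you flag and then do not resolve. On $\Omega=[0,\infty)$ the outward normal at $x=0$ is $-\partial_x$, so the positivity lemma for $w_t-w_{xx}+w\geq 0$, $w(x,0)\geq 0$ requires $-a\,w_x(0,t)+b\,w(0,t)\geq 0$, i.e.\ $w_x(0,t)\leq 0$ when $b=0$; the inequality $w_x(0,t)\geq 0$ that you derive in the base case (and that recurs throughout the induction) is not sufficient. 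Concretely, the solution of $w_t=w_{xx}-w$, $w(x,0)=0$, $w_x(0,t)=1$ is $w(x,t)=-\int_0^t G(x,t;0,s)\,\mathrm{d}s<0$, yet it satisfies every hypothesis of your comparison step, growth bound included. The same issue hides in route (b): the correct Neumann representation carries the boundary term with a minus sign, $-\int_0^t G(x,t;0,s)h(s)\,\mathrm{d}s$ (check it against $u_0=0$, $h=-1$, whose solution is positive), so positivity of $G$ together with the monotonicity of $q\mapsto -\alpha/(1+q^m)$ pushes the comparison in the direction \emph{opposite} to the one you want.

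This is not a removable technicality, because with the correct signs the plain Picard iteration $\overline{q}^{\,(k+1)}_x(0,t)=g(\overline{q}^{\,(k)}(0,t))$ with $g$ increasing produces iterates that \emph{alternate} about the solution rather than decrease: a smaller $\overline{q}^{\,(k)}(0,\cdot)$ gives a more negative flux, hence a larger influx, hence a larger $\overline{q}^{\,(k+1)}$. (Try $m=1$, $\alpha=2$, $f=2e^{-x}$: one finds $\overline{q}^{\,(1)}\leq \overline{q}^{\,(0)}$ but $\overline{q}^{\,(2)}\geq \overline{q}^{\,(1)}$, with strict inequality for $t>0$.) To obtain genuine monotonicity you must modify the iteration to a Robin form, $-\overline{q}^{\,(k+1)}_x(0,t)+\Gamma\,\overline{q}^{\,(k+1)}(0,t)=-g(\overline{q}^{\,(k)}(0,t))+\Gamma\,\overline{q}^{\,(k)}(0,t)$ with $\Gamma$ the Lipschitz constant of $g$, so that the right-hand side is nondecreasing in $\overline{q}^{\,(k)}$ and the boundary operator has the sign structure the positivity lemma actually requires; this is the role $\Gamma$ plays in the cited Lemma 3.3 of Pao. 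The paper's own appendix proof contains the same $w_x(0,t)\geq 0$ step, so you are reproducing its argument faithfully, but as written neither version establishes the claimed monotonicity, and you should rework the boundary comparison rather than inherit it.
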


\begin{proof}
    It is clear that $g(x,t, q):=\frac{-\alpha}{1+ [q(x,t)]^m}$ is continuous on $\overline{\Omega}_T \times \langle \check{q}, \hat{q} \rangle$ and satisfies 
\begin{align}
  \label{eq:lipg}  |g(x,t,q_1)-g(x,t,q_2)|&\leq \Gamma |q_1-q_2| \quad \text{for} \quad \check{q}\leq q_2 \leq q_1 \leq \hat{q},  \\
  \label{eq:growg} |g(x,t,q)|&\leq \alpha  \quad \text{for}  \quad \check{q}\leq q \leq \hat{q},
\end{align}
where $\Gamma=\sup\bigl\{\big|\frac{\partial g}{\partial q}\big|\bigr\}$ in $\check{q} \leq q \leq \hat{q}$ for fixed $(x,t)$. \\
Eqs. \eqref{eq:Iterproa}--\eqref{eq:Iterprod}, together with Eqs. \eqref{eq:lipg}--\eqref{eq:growg}, satisfy the hypotheses of Lemma 3.3 in Chapter 7 of \cite{pao2012}, from which the results follow. For more detail, the reader may refer to Appendix~\ref{ap:ap2}.
\end{proof}

\begin{remark}\label{rm:uni}
    Every solution is an upper and lower solution. If $q$ is a solution in $\langle \check{q}, \hat{q} \rangle$, then $q$ and $\check{q}$ are upper and lower solutions, respectively. By setting $\overline{q}^{(0)}=q$ as initial iteration in Eqs. \eqref{eq:Iterproa}--\eqref{eq:Iterprod}, we get $\{\overline{q}^{(k)}=q\}$ for every $k$.
   From the Lemma \ref{le:monot}, it implies that $\underline{q}\leq q$. Similarly, by considering $q$ and $\hat{q}$ as lower and upper solutions,  then $\overline{q}\geq q$. 
\end{remark}
\begin{theorem}\label{th:ExUnNde}
    Assume Hypothesis~\ref{hy:gen} and \ref{hy:ini} hold. Let $\check{q}$ and $\hat{q}$ be lower and upper solutions of Eqs. \eqref{eq:ModNodela}--\eqref{eq:ModNodeld}. Then  Eqs. \eqref{eq:ModNodela}--\eqref{eq:ModNodeld} has a unique solution in $\langle \check{q}, \hat{q} \rangle$.
\end{theorem}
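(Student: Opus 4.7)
The plan is to combine the monotone convergence from Lemma~\ref{le:monot} with a Lipschitz-based uniqueness argument, showing that the two limits $\underline{q}$ and $\overline{q}$ coincide and give the unique solution in $\langle \check{q}, \hat{q}\rangle$. First, I would exploit that Lemma~\ref{le:monot} provides iterates $\{\underline{q}^{(k)}\}$, $\{\overline{q}^{(k)}\}$ that are monotone, sandwiched between $\check{q}$ and $\hat{q}=M$, and converge pointwise to limits $\underline{q}\le\overline{q}$. Each iterate solves the linear problem~\eqref{eq:Iterproa}--\eqref{eq:Iterprod}, so Theorem~\ref{th:Main} yields the integral representation
\begin{equation*}
\overline{q}^{(k+1)}(x,t)=\int_{0}^{\infty}G(x,t;\xi,0)\,f(\xi)\,\mathrm{d}\xi+\int_{0}^{t}G(x,t;0,s)\,g\!\bigl(0,s,\overline{q}^{(k)}(0,s)\bigr)\,\mathrm{d}s,
\end{equation*}
and similarly for $\underline{q}^{(k+1)}$. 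Using the uniform bound $|g|\le\alpha$ from~\eqref{eq:growg}, the Green's-function bounds~\eqref{eq:Greeid}, and the dominated convergence theorem, I can pass to the limit $k\to\infty$. Continuity of $g$ in its third argument then shows both $\underline{q}$ and $\overline{q}$ satisfy the nonlinear integral representation associated with~\eqref{eq:ModNodela}--\eqref{eq:ModNodeld}. The regularity results cited after Theorem~\ref{th:Main} upgrade both limits to classical $\mathcal{C}^{2,1}(\Omega_T)$ solutions lying in $\langle \check{q},\hat{q}\rangle$.

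The heart of the argument is to show $\underline{q}\equiv\overline{q}$. Setting $w=\overline{q}-\underline{q}\ge 0$ and subtracting the two integral representations gives, upon evaluation at $x=0$,
\begin{equation*}
w(0,t)=\int_{0}^{t}G(0,t;0,s)\,\bigl[g(0,s,\overline{q}(0,s))-g(0,s,\underline{q}(0,s))\bigr]\,\mathrm{d}s.
\end{equation*}
The Lipschitz bound~\eqref{eq:lipg} with constant $\Gamma$ then yields the scalar inequality
\begin{equation*}
|w(0,t)|\le \Gamma\int_{0}^{t}G(0,t;0,s)\,|w(0,s)|\,\mathrm{d}s.
\end{equation*}
I would then apply a generalized Gronwall inequality tailored to the weakly singular kernel $G(0,t;0,s)$, combined with the small-time estimate $\int_{0}^{t}G(0,t;0,s)\,\mathrm{d}s\le Ct\exp(-t)$ from~\eqref{eq:Greeid}. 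On a sufficiently short interval $[0,t^{\ast}]$ this furnishes a strict contraction and forces $w(0,t)\equiv 0$ there; feeding this back into the full integral representation of $w(x,t)$ propagates $w\equiv 0$ into the interior of $\Omega$ on $[0,t^{\ast}]$. A bootstrap step, restarting from $t^{\ast}$ with the same Lipschitz and kernel estimates, covers the finite interval $[0,T]$ in finitely many steps.

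Once $\underline{q}=\overline{q}=:q$ is established, uniqueness in $\langle \check{q},\hat{q}\rangle$ is immediate from Remark~\ref{rm:uni}: any solution $\tilde{q}\in\langle\check{q},\hat{q}\rangle$ sits between $\underline{q}$ and $\overline{q}$ and is therefore equal to $q$. The step I expect to be most delicate is the Gronwall argument, because $G(0,t;0,s)$ behaves like $(t-s)^{-1/2}$ as $s\to t^{-}$, so the kernel is integrable but not bounded. I would handle this either by a Picard-iteration estimate exploiting that convolution powers of an $L^1$ singular kernel become uniformly bounded, or by raising the inequality to an appropriate power in $t$ to tame the singularity before invoking the classical Gronwall lemma.
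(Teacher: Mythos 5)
Your proposal follows essentially the same route as the paper: monotone iterates from Lemma~\ref{le:monot}, passage to the limit in the Green's-function representation by dominated convergence (plus the cited regularity results), and a short-time contraction argument using the Lipschitz constant $\Gamma$ to force $\overline{q}=\underline{q}$, followed by a continuation step to cover $[0,T]$ and an appeal to Remark~\ref{rm:uni}. The singular-kernel Gronwall machinery you flag as delicate is not needed: the paper simply bounds $G(x,t;0,s)\le 1/\sqrt{\pi(t-s)}$, integrates to obtain $|w(x,t)|\le (2\Gamma/\sqrt{\pi})\,t^{1/2}\|w\|_t$ uniformly in $x$, and picks $t_1$ small enough that this is a strict contraction of the sup norm.
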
 
\begin{proof}
Using integral representation, the solution  $q^{(k)}$ of  Eqs. \eqref{eq:Iterproa}--\eqref{eq:Iterprod} is 
\begin{equation}
   \label{eq:intesol} q^{(k)}(x,t)=\int_{0}^{\infty}G(x,t;\xi, 0)f(\xi) \mathrm{d}\xi+\int_{0}^{t}G(x,t;0,s)g(0,s,q^{(k-1)}(0,s))\mathrm{d}s,
\end{equation}
where
\begin{align*}
G(x, t ;\xi, s)= & \frac{\exp (-t)}{\sqrt{4\pi(t-s)}}
\left[\exp \left(\frac{-(x-\xi)^2}{4(t-s)}\right)+\exp \left(\frac{-(x+\xi)^2}{4(t-s)}\right)\right],
\end{align*}
and $\{q^{(k)}\}$ is either $\{\overline{q}^{(k)}\}$ or $\{\underline{q}^{(k)}\}$. \\
Since \( g(x, t, q) \) is Lipschitz continuous and bounded by \( \alpha \) in \( \langle \check{q}, \hat{q} \rangle \), it follows that
\begin{equation*}
    |G(x, t; 0, s)\, g(0, s, q^{(k-1)})| \leq \alpha\, |G(x, t; 0, s)|.
\end{equation*}
Clearly, for each fixed $t>s>0$ , $G(x,t; 0, s)$ is integrable. Letting $k\rightarrow \infty$ in Eq. \eqref{eq:intesol}  and applying the dominated convergence theorem shows that the limit $q$ of $q^{(k)}$ satisfies
\begin{align}\label{eq:sol1}
    q(x,t)=\int_{0}^{\infty}G(x,t;\xi, 0)f(\xi) \mathrm{d}\xi+\int_{0}^{t}G(x,t;0,s)g(0,s,q(0,s))\mathrm{d}s.
\end{align} 
From  Theorem \ref{th:Main} and regularity arguments in \cite{pao2012} (Lemma 1.1, Chapter 7),  $q(x,t)$ is the solution of  Eqs. \eqref{eq:ModNodela}--\eqref{eq:ModNodeld}.  \\
From Remark \ref{rm:uni}, all solutions within $\langle \check{q}, \hat{q} \rangle$ lie between $\underline{q}$ and $\overline{q}$. To establish the uniqueness of the solution in $\langle \check{q}, \hat{q} \rangle$, it is sufficient to demonstrate that $\overline{q} = \underline{q}$. Let $w = \overline{q} - \underline{q}$. Then,  
\begin{align}
   \nonumber |w(x,t)|&=\Big|\int_{0}^{t}G(x,t;0,s)\left(g(0,s,\overline{q}(0,s))-g(0,s,\underline{q}(0,s))\right)\mathrm{d}s\Big|,\\
   \label{eq:uniqeq} &\leq \Gamma \Big|\int_{0}^{t}G(x,t;0,s)w(0,s)\mathrm{d}s\Big|.
\end{align} 
 For each fixed $t>s>0$, we get 
\begin{align*}
    G(x,t;0,s)\leq \frac{1}{\sqrt{\pi (t-s)}}.\\
\end{align*}
For fixed $t>0$, define 
\begin{equation}
    \|w\|_t=\sup\big\{|w(\xi,s)|:  (\xi, s) \in \Omega \times [0, t]\big\}.
\end{equation}
Then Eq.\eqref{eq:uniqeq} implies that
\begin{align*}
    |w(x,t)|&\leq \frac{\Gamma \|w\|_t}{\sqrt{\pi}} \Big|\int_{0}^{t}\frac{1}{\sqrt{t-s}} \mathrm{d}s\Big|\\
    |w(x,t)|&\leq K_1t^{\frac{1}{2}}\|w\|_t \quad \forall x \in \Omega,
\end{align*}
where $K_1=\frac{2\Gamma}{\sqrt{\pi}}$ is constant independent of $(x,t)$. \\
If $\|w\|_{t}>0$, choose $t_1>0$ such that $t_1^{\frac{1}{2}}K_1<\epsilon<1$. Since $\|w\|_t$ is a nondecreasing function of $t$, 
the above inequality  gives 
\begin{equation*}
    |w(x,t)|< \epsilon\|w\|_{t_1} <\|w\|_{t_1}\quad \forall (x,t) \in \Omega \times [0, t_1].
\end{equation*}
It implies that, $\epsilon \|w\|_{t_1}$ is upper bound for $|w(x,t)|$ in $\Omega \times [0,t_1]$, which contradicts the definition of $\|w\|_{t_1}$. Therefore, $\|w\|_{t_1}=0$, which shows that $\underline{q}=\overline{q}$ in $\Omega \times [0, t_1]$. Using $q(x,t_1)$ as initial condition in the domain $\Omega\times(t_1,T]$, a continuation of above argument leads to the conclusion $\underline{q}=\overline{q}$ in $\Omega_T$.
\end{proof}
In the following Theorem, we discuss the existence of a solution to a time delay system.
\begin{theorem}\label{th:UETM}
    Assume Hypothesis~\ref{hy:gen} holds. Then Eqs. \eqref{eq:Mod1a}--\eqref{eq:Mod1e} with $\tau>0$  has a solution. 
\end{theorem}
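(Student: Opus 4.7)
The plan is to use the method of steps (ladder argument) to build the solution piece by piece on $[0,\tau]$, $[\tau, 2\tau]$, and so on. The key observation is that on $[n\tau,(n+1)\tau]$ the argument $t-\tau$ in the delayed flux \eqref{eq:Mod1c} lies in $[(n-1)\tau, n\tau]$, where $q$ has already been determined (by the history $h$ when $n=0$, and by the preceding step otherwise); hence on each such interval the problem reduces to a \emph{linear} parabolic equation with prescribed boundary flux, which is precisely the setting of Theorem~\ref{th:Main}.

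\textbf{Base and inductive step.} On $[0,\tau]$, the flux $\phi_0(t) := -\alpha/(1+h(t-\tau)^m)$ is continuous and bounded by $\alpha$, and the initial datum $f$ is continuous and bounded by $M$; both satisfy the growth condition \eqref{eq:grocon} with $b_0=b_1=0$, and $h(0)=f(0)$ gives compatibility at the corner $(0,0)$. Theorem~\ref{th:Main} then produces a bounded classical solution on $\Omega \times [0,\tau]$ that decays to $0$ as $x\to\infty$. Assume inductively that $q$ has been constructed on $\Omega \times [0, n\tau]$. Define $\phi_n(t) := -\alpha/(1 + q(0, t-\tau)^m)$ for $t \in [n\tau, (n+1)\tau]$; continuity of $q(0,\cdot)$ on $[(n-1)\tau,n\tau]$ makes $\phi_n$ continuous and bounded by $\alpha$, and the terminal profile $q(\cdot, n\tau)$ is a bounded continuous initial datum that decays at infinity. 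A time translation by $-n\tau$ then reduces this to the hypotheses of Theorem~\ref{th:Main}, producing the extension to $[n\tau, (n+1)\tau]$. Iterating over $n$ yields a solution on $\Omega \times [0,\infty)$.

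\textbf{Matching across joins and the main obstacle.} What must be verified is that the patched function is actually a solution at each join $t = n\tau$. Continuity of $q$ in $t$ at $t=n\tau$ is built in, since the new initial datum is the terminal value of the previous piece. The flux is continuous across the join as well: both one-sided limits of $-\alpha/(1+q(0,t-\tau)^m)$ at $t=n\tau$ equal $-\alpha/(1+q(0,(n-1)\tau)^m)$, by continuity of $q(0,\cdot)$ on $[(n-1)\tau, n\tau]$. The step I expect to require the most care is the $\mathcal{C}^{2,1}$ regularity at these joins: Theorem~\ref{th:Main} together with the regularity results cited in \cite{pao2012} (Chapter~7, Lemma~1.1) gives smoothness in the interior of each sub-interval, and the matching of data across joins just noted should then permit a standard parabolic bootstrap to conclude $q \in \mathcal{C}^{2,1}(\Omega_T)$ for every $T > 0$. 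No growth-condition bookkeeping beyond the trivial $b_0=b_1=0$ case is needed, because $|\phi_n|\le \alpha$ and $\|q(\cdot, n\tau)\|_\infty$ stays uniformly bounded at each step.
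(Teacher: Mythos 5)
Your proposal is correct and follows essentially the same route as the paper's own proof: a ladder (method-of-steps) argument that, on each interval $[n\tau,(n+1)\tau]$, treats the delayed flux as a known continuous bounded boundary datum and invokes Theorem~\ref{th:Main}. In fact, your write-up is more detailed than the paper's two-sentence argument, as you additionally verify the growth conditions, the compatibility/matching at the joins $t=n\tau$, and the boundedness propagated across steps.
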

\begin{proof}
To show the existence of the solution to time delay Eqs. \eqref{eq:Mod1a}--\eqref{eq:Mod1e} with $\tau>0$, we consider the problem in subdomain $\Omega_1=[0,\tau]\times \Omega$. Since $q(0,t-\tau)$ is know in $[0,\tau]$, there exists a solution in $\Omega_1$ (see Theorem \ref{th:Main}). Knowing the solution in $\Omega_1$, a ladder argument ensures the existence of the solution to Eqs. \eqref{eq:Mod1a}--\eqref{eq:Mod1e} on $\Omega_m=[0, m\tau]\times \Omega$ for every positive integer $m=1,2,3...$ . 
\end{proof}
In Theorem \ref{th:ExUnNde}, we demonstrated the uniqueness of solutions in $\langle \check{q}, \hat{q} \rangle$. In the following theorem, we show the global uniqueness of the solution in $\mathcal{C}^{2,1}(\Omega_T)$ under the assumptions of Hypothesis~\ref{hy:uni}. 
    \begin{hypothesis}
        \label{hy:uni}
    (Global uniqueness)
    Let $q(x,t)$ be the solution of Eqs. \eqref{eq:Mod1a}--\eqref{eq:Mod1e}. Where
    \begin{itemize}
    \item $q(x,t)\geq 0$;
        \item $\frac{\partial q}{\partial x} \rightarrow 0$ as $x \rightarrow \infty$; and 
        \item $q(x,t), q_x(x,t) \in L^2(\Omega)$ for each $t$.
    \end{itemize}
    \end{hypothesis} 
    \begin{theorem}\label{th:Glob}
        Assume Hypothesis~\ref{hy:gen}, \ref{hy:ini} and \ref{hy:uni} hold.  Then Eqs. \eqref{eq:Mod1a}--\eqref{eq:Mod1e} have a unique solution in $C^{2,1}(\Omega_T)$.
    \end{theorem}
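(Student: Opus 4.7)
The plan is two-fold: extract existence from the theorems already proved, then prove uniqueness in $\mathcal{C}^{2,1}(\Omega_T)$ by an energy method that is only now available thanks to the additional regularity in Hypothesis~\ref{hy:uni}. For existence, when $\tau=0$ the classical solution constructed in the sector $\langle \check{q},\hat{q}\rangle$ by Theorem~\ref{th:ExUnNde} lies in $\mathcal{C}^{2,1}(\Omega_T)$; when $\tau>0$, Theorem~\ref{th:UETM} delivers a solution via the ladder argument on $[n\tau,(n+1)\tau]$. I would then verify that the constructed solution meets Hypothesis~\ref{hy:uni}: nonnegativity follows from $f\geq 0$ together with the lower solution $\check{q}>0$ (or by inspection of the integral representation for $\tau>0$, once one checks that the negative boundary contribution cannot overwhelm $f$); the decay $q_x\to 0$ and the $L^2$ integrability of $q$ and $q_x$ follow from differentiating the representation in Theorem~\ref{th:Main} and using the Gaussian tails of $G$ together with the bound $\int_0^t G(x,t;0,s)\,\mathrm{d}s\le Cte^{-t}$ from Eq.~\eqref{eq:Greeid}.

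For uniqueness, suppose $q_1,q_2\in \mathcal{C}^{2,1}(\Omega_T)$ are two solutions both satisfying Hypothesis~\ref{hy:uni}, and set $w=q_1-q_2$. Then $w$ solves the linear homogeneous problem $w_t=w_{xx}-w$ with $w(x,0)=0$, $w(0,t)=0$ on $[-\tau,0]$, and inhomogeneous flux $w_x(0,t)=g(q_1(0,t-\tau))-g(q_2(0,t-\tau))$, controlled by $|w_x(0,t)|\le \Gamma\,|w(0,t-\tau)|$ from Eq.~\eqref{eq:lipg}. Defining $E(t)=\tfrac{1}{2}\|w(\cdot,t)\|_{L^2(\Omega)}^2$, which is finite since $q_i\in L^2(\Omega)$, I would multiply the PDE by $w$ and integrate over $\Omega$. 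Integrating by parts and using that $w$ and $w_x$ vanish at infinity (the former because an $H^1$ function on $[0,\infty)$ tends to zero at infinity, the latter directly from Hypothesis~\ref{hy:uni}) gives
\begin{equation*}
E'(t)=-\|w_x(\cdot,t)\|_{L^2}^2-2E(t)-w(0,t)\,w_x(0,t).
\end{equation*}
Applying the half-line trace inequality $w(0,t)^2\le 2\|w\|_{L^2}\|w_x\|_{L^2}$ followed by a Young's inequality with parameter $\epsilon<1/\Gamma$ lets me absorb the $\|w_x\|_{L^2}^2$ contribution into the dissipation, producing a differential inequality of the form
\begin{equation*}
E'(t)\le -c_1 E(t)+c_2\,w(0,t-\tau)^2,\qquad c_1>0.
\end{equation*}

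The delay is then handled step by step. On $[0,\tau]$ the history gives $w(0,t-\tau)\equiv 0$, so the inequality becomes $E'(t)\le -c_1 E(t)$ with $E(0)=0$, and Gr\"onwall forces $w\equiv 0$ on $[0,\tau]$; induction on $[n\tau,(n+1)\tau]$ then propagates this to all of $[0,T]$. The main obstacle is the degenerate case $\tau=0$, where the step-by-step procedure collapses and the Young-trace argument only closes if $\Gamma$ is small. To overcome this, in the $\tau=0$ case I would bypass the energy inequality and instead adapt the Volterra--Gr\"onwall argument from the proof of Theorem~\ref{th:ExUnNde}: write $w$ as in Eq.~\eqref{eq:sol1}, use the kernel bound $G(0,t;0,s)\le (\pi(t-s))^{-1/2}$ together with the Lipschitz constant $\Gamma$ to obtain $|w(0,t)|\le K_1 t^{1/2}\|w\|_t$ on a small interval $[0,t_1]$ with $K_1 t_1^{1/2}<1$, which forces $\|w\|_{t_1}=0$, and then iterate forward in time. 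A second technical point, where I would be careful, is justifying that each solution satisfying Hypothesis~\ref{hy:uni} actually admits the integral representation used above; this follows from applying the Green's identity with the kernel $G$ on $\Omega\times(0,t)$ and invoking the decay of $q$ and $q_x$ at infinity to kill the boundary terms at $x=\infty$.
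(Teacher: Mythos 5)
Your proposal is correct, but your uniqueness argument takes a genuinely different route from the paper's, and the difference is instructive. The paper also forms $w=q_1-q_2$ and computes $\tfrac{1}{2}\tfrac{\mathrm{d}}{\mathrm{d}t}\int_\Omega w^2\,\mathrm{d}x=-\|w_x\|_{L^2}^2-\|w\|_{L^2}^2-w(0,t)w_x(0,t)$, but it never invokes the Lipschitz constant $\Gamma$, a trace inequality, or Gr\"onwall: it simply observes that the boundary term equals $\alpha\bigl([q_2(0,t)]^m-[q_1(0,t)]^m\bigr)\bigl(q_1(0,t)-q_2(0,t)\bigr)/\bigl((1+[q_1]^m)(1+[q_2]^m)\bigr)\leq 0$, because $s\mapsto s^m$ is nondecreasing on $s\geq 0$ --- this is precisely where the nonnegativity assumption in Hypothesis~\ref{hy:uni} is used. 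The monotone (``dissipative'') structure of the boundary nonlinearity makes $\tfrac{\mathrm{d}}{\mathrm{d}t}\int w^2\leq 0$ outright, covering $\tau=0$ and the first rung of the $\tau>0$ ladder in one stroke. Your Lipschitz--trace--Young--Gr\"onwall route buys generality (it would work for any Lipschitz boundary nonlinearity, monotone or not), at the cost of extra machinery; two small remarks on it: your insistence on $c_1>0$ is both unnecessary and possibly unachievable for large $\Gamma$ (after Young's inequality the coefficient of $E(t)$ can be positive), but Gr\"onwall with $E(0)=0$ gives $E\equiv 0$ regardless of the sign of $c_1$, and for the same reason your worry that the $\tau=0$ case ``only closes if $\Gamma$ is small'' is unfounded --- the detour through the Volterra integral argument (which additionally requires justifying the integral representation for an arbitrary $\mathcal{C}^{2,1}$ solution, as you note) is valid but avoidable. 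Your extra paragraph verifying that the constructed solution satisfies Hypothesis~\ref{hy:uni} goes beyond the paper, which treats that hypothesis as a standing assumption on the solutions being compared.
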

    \begin{proof}
        We first prove the result for the case $\tau =0$, and then extend the proof for $\tau>0$ using the ladder argument.
    Suppose $q_1$ and $q_2$ are two solutions to Eqs. \eqref{eq:Mod1a}--\eqref{eq:Mod1e}, then $w=q_1-q_2$ satisfies 
    \begin{align}
   \label{eq:globunia} w_t&=w_{xx} - w,\   (x, t) \in \Omega_T, \\
    \label{eq:globunib} w(\infty,t)&=0, \ t\in[0,T], \\
   \label{eq:globunic} w_x(0,t)&=\frac{-\alpha}{1+[q_1(0,t)]^m}+\frac{\alpha}{1+[q_2(0,t)]^m}, \ t\in[0,T], \\
    \label{eq:globunid} w(x,0)&=0,  \ x\in \Omega. 
\end{align}
       From the natural estimation of Eqs. \eqref{eq:globunia}--\eqref{eq:globunid}
       \begin{align*}
           \frac{1}{2}\frac{\mathrm{d}}{\mathrm{d}t}\int_{\Omega}|w(x,t)|^2\mathrm{d}x&=-\int_{\Omega}|w_x(x,t)|^2\mathrm{d}x-\int_{\Omega}|w(x,t)|^2\mathrm{d}x\\&+\left(\frac{\alpha}{1+[q_1(0,t)]^m}-\frac{\alpha}{1+[q_2(0,t)]^m}\right)\left(q_1(0,t)-q_2(0,t)\right),\\
           &\leq \left(\frac{\alpha}{1+[q_1(0,t)]^m}-\frac{\alpha}{1+[q_2(0,t)]^m}\right)\left(q_1(0,t)-q_2(0,t)\right),\\
           &= \frac{\alpha \left([q_2(0,t)]^m-[q_1(0,t)]^m\right)}{\left(1+[q_1(0,t)]^m\right)\left(1+[q_2(0,t)]^m\right)} \left(q_1(0,t)-q_2(0,t)\right),\\
           &\leq 0.
       \end{align*}
       It implies that, 
       \begin{equation*}
           \int_{\Omega}|w(x,t)|^2\mathrm{d}x=0.
       \end{equation*}
       Hence, $q_1=q_2$ $\forall (x,t) \in \Omega_T$.
       
        For $\tau>0$, $q_1(0, t-\tau)=q_2(0,t-\tau)=h(t-\tau)$ in $[0, \tau]$, it implies that $w_x(0, t)=0 \ \forall t \in [0,\tau]$. Using similar arguments as above, the solution is unique in $\Omega \times [0,\tau]$. Since  $q_1(0, t-\tau)=q_2(0,t-\tau)$ in $[0, 2\tau]$, there exists a unique solution in $\Omega \times [0,2\tau]$.  The ladder argument thus shows  the uniqueness of solutions in $\Omega \times [0,m\tau], \ \ m=1,2,3...$ 
    \end{proof}
        \begin{remark}
        Under the assumptions in Hypothesis \ref{hy:gen}, \ref{hy:ini} and \ref{hy:uni}, there exists a unique solution in $\Omega \times [0, T]$ for the  following model 
        \begin{align}
   \nonumber q_t&=q_{xx} - q, \\
    \nonumber q(\infty,t)&=0, \\
   \nonumber q_x(0,t)&=\frac{-\alpha}{1+q(0,t-\tau)^m} -\frac{\beta}{1+q(0,t)^m},\\
    \nonumber q(x,0)&=f(x), \\
   \nonumber  q(0,t)&=h(t), \quad t \in [-\tau,0].
\end{align}
    \end{remark}

 \subsection{Asymptotic behavior of solutions}
From a practical perspective, studying the long-term behavior of the solution is essential. The following lemma demonstrates that the solution remains bounded as $t \rightarrow \infty$. Moreover, since Eqs. \eqref{eq:Mod1a}--\eqref{eq:Mod1e} depend on parameters $\alpha, m$ and $\tau$, the stability of the steady state solution is influenced by these parameters. Time delay is a key parameter in the time evolution problem. Thus, we treat the time delay as a bifurcation parameter and examine its effect on the stability of the steady state solution.

The equivalent steady state problem of the Eqs.    \eqref{eq:Mod1a}--\eqref{eq:Mod1e} is
\begin{align*}
   \nonumber  q_{xx} - q&=0,  \  x \in \Omega, \\
    \nonumber q(\infty)&=0,   \\
   \label{eq:steNodel} q_x(0)&=\frac{-\alpha}{1+[q(0)]^m}.
\end{align*}
There exists a unique $c$ in $(0, \alpha)$ such that $c+c^{m+1}=\alpha$ and the steady state solution is given as 
\begin{equation}
    q_s(x)=c\exp{(-x)} \quad \forall x \in \Omega.
\end{equation}
\begin{proposition} 
    Let $q(x,t)$ be the solution of Eqs. \eqref{eq:Mod1a}--\eqref{eq:Mod1e} given in   Eq. \eqref{eq:sol1}. Then $\exists K_1 <\infty$, such that 
    \begin{equation*} \limsup_{t\rightarrow\infty}|q(x,t)| \leq K_1, \ \forall x\in \Omega.
    \end{equation*}
    \end{proposition}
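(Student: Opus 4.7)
The plan is to exploit the integral representation \eqref{eq:sol1} together with the uniform pointwise bounds $|f(\xi)| \leq M$ (by Hypothesis \ref{hy:gen} and the definition $M = \sup_{x\in\Omega} f(x)$) and $|g(0,s,q(0,s-\tau))| = \alpha/(1 + q(0,s-\tau)^m) \leq \alpha$, which follows from the nonnegativity of $q$ guaranteed by Proposition \ref{pr:lusol1}. Substituting these into the representation and splitting it into its initial-data and boundary-flux contributions yields
\begin{equation*}
|q(x,t)| \leq M \int_0^\infty G(x,t;\xi,0)\, d\xi + \alpha \int_0^t G(x,t;0,s)\, ds.
\end{equation*}

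Next I would invoke the two Green's function estimates recorded in \eqref{eq:Greeid}, namely $\int_0^\infty G(x,t;\xi,0)\, d\xi \leq K$ and $\int_0^t G(x,t;0,s)\, ds \leq C t \exp(-t)$, to obtain the uniform pointwise bound $|q(x,t)| \leq MK + \alpha C t \exp(-t)$. Passing to $\limsup_{t\to\infty}$, the second term vanishes because $t\exp(-t)\to 0$, while the first is already independent of $t$, so $\limsup_{t\to\infty}|q(x,t)| \leq MK$, and the proposition holds with $K_1 := MK$.

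The step that requires the most care is justifying the global validity of the Duhamel formula \eqref{eq:sol1} for $t$ ranging over all of $[0,\infty)$, including in the delayed case $\tau > 0$, since Theorem \ref{th:ExUnNde} only derives it explicitly when $\tau = 0$. The fix is the same ladder argument that underlies Theorem \ref{th:UETM}: on each slab $[(n-1)\tau, n\tau]$ the retarded quantity $q(0,s-\tau)$ is already determined from the previous slab, so Theorem \ref{th:Main} supplies an integral representation there, and concatenating these yields a global representation of the form \eqref{eq:sol1}. Because both the universal bound $|g| \leq \alpha$ and the Green's function estimates \eqref{eq:Greeid} hold uniformly in the slab index, the estimate above propagates slab-to-slab without deterioration, so the argument carries through to the delayed setting essentially unchanged.
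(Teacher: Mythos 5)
Your proof is correct and follows essentially the same route as the paper: bound $|f|\leq M$ and $|g|\leq\alpha$ in the Duhamel representation, apply the two Green's function estimates in Eq.~\eqref{eq:Greeid}, and note that the boundary term $\alpha C t\exp(-t)$ vanishes as $t\to\infty$, leaving $K_1=MK$. Your additional paragraph justifying the global validity of the representation for $\tau>0$ via the ladder argument is a careful touch that the paper leaves implicit, but it does not change the substance of the argument.
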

    \begin{proof} Since $|f(x)|\leq M$ and $|g(0,t,q)|\leq \alpha$ are bounded functions, we get
        \begin{align*}
            |q(x,t)| \leq M \int_{0}^{\infty}|G(x,t;\xi, 0)| \mathrm{d}\xi+\alpha \int_{0}^{t}|G(x,t;0,s)|\mathrm{d}s.
        \end{align*} 
        From the estimation Eq. \eqref{eq:Greeid} and letting $t\rightarrow \infty$, above inequality gives
        \begin{equation*}
          \limsup_{t \rightarrow \infty} |q(x,t)|\leq K_1,
        \end{equation*}
        where $K_1=MK$.
    \end{proof}
\begin{proposition}  Assume Hypothesis \ref{hy:gen} and \ref{hy:uni} hold.
 Let $q_s(x)$ be the steady state solution of Eqs. \eqref{eq:ModNodela}--\eqref{eq:ModNodeld}. If $c\exp{(-x)}\leq f(x)\leq M $. Then 
 \begin{equation*}
     |q(x,t)-q_s(x)|\leq M \quad \text{for} \quad t\geq 0 \quad \text{and} \quad x \in \Omega.
 \end{equation*}
\end{proposition}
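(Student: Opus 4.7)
The plan is a direct sandwich argument: recognize that the steady state $q_s(x) = c\exp(-x)$ doubles as a (time-independent) lower solution of Eqs. \eqref{eq:ModNodela}--\eqref{eq:ModNodeld} whenever $c\exp(-x) \leq f(x)$, while $\hat{q}(x,t) = M$ is an upper solution by essentially the same argument used in Proposition \ref{pr:lusol1}. Comparing the solution $q$ with these two bounds via Lemma \ref{le:monot} and Theorem \ref{th:ExUnNde} will yield the desired estimate.

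First I would verify that $q_s$ fits the lower-solution definition. Because $q_s'' = q_s$, the PDE inequality $(q_s)_t - (q_s)_{xx} + q_s \leq 0$ and the far-field condition $q_s(\infty) = 0$ both hold with equality. For the nonlinear flux, the defining relation $c + c^{m+1} = \alpha$ rewritten as $c(1+c^m) = \alpha$ gives $(q_s)_x(0) = -c = -\alpha/(1+[q_s(0)]^m)$, again with equality, satisfying the reversed inequality in Eq. \eqref{eq:upperc}. The initial-condition inequality $q_s(x,0) = c\exp(-x) \leq f(x)$ is the standing assumption, and the growth bound is trivial since $|q_s| \leq c$. Since $q_s(0) = c \leq f(0) \leq M$ and $q_s$ is decreasing in $x$, the pair $(q_s, M)$ is ordered. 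Theorem \ref{th:ExUnNde} (with initial iterates $\underline{q}^{(0)} = q_s$ and $\overline{q}^{(0)} = M$) then produces a solution in $\langle q_s, M\rangle$; uniqueness identifies it with $q$, so $q_s(x) \leq q(x,t) \leq M$ for all $(x,t) \in \Omega \times [0,\infty)$.

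Since $q_s(x) \geq 0$, this sandwich immediately gives $0 \leq q(x,t) - q_s(x) \leq M - q_s(x) \leq M$, establishing $|q(x,t) - q_s(x)| \leq M$. The only step requiring real care is the verification that $q_s$ is a lower solution at the nonlinear Robin-type boundary, which hinges on the algebraic identity $c(1+c^m) = \alpha$ built into the definition of $c$. Beyond this, the result is a one-line corollary of the comparison principle already in place.
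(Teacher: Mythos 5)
Your proposal is correct and follows essentially the same route as the paper: the paper likewise observes that $q_s$ and $M$ are ordered lower and upper solutions and sandwiches $q$ between them via $q_s(x)-M\leq q_s(x)\leq q(x,t)\leq M\leq q_s(x)+M$. Your write-up simply makes explicit the verification (via $c(1+c^m)=\alpha$) that the paper leaves implicit.
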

\begin{proof}
Since $q_s(x)$ and $M$ are ordered lower and upper solutions of Eqs. \eqref{eq:ModNodela}--\eqref{eq:ModNodeld},  there exists a unique solution $q(x,t)$ such that 
\begin{equation*}
    q_s(x)-M\leq q_s(x)\leq q(x,t)\leq M \leq q_s(x)+M.
\end{equation*}
 It implies 
 \begin{equation*}
     |q(x,t)-q_s(x)|\leq M  \quad \text{for}\quad t\geq 0 \quad \text{and} \quad x\in \Omega.
 \end{equation*}
\end{proof}

We now consider Hopf bifurcations. For $\tau > 0$, the function $q_s(x) = c\exp(-x)$ is the unique positive steady state solution for fixed values of $m$ and $\alpha$. To analyze the stability characteristics of this solution, we linearize Eqs. \eqref{eq:Mod1a}--\eqref{eq:Mod1e} using  
\begin{equation} \label{eq:purt}
q(x,t)=c\exp{(-x)}+v(x,t),
\end{equation}
This approach allows us to systematically assess the conditions under which the steady state remains stable in response to variations in the bifurcation parameter $\tau$. By substituting Eq. \eqref{eq:purt} into the system and using a Taylor series expansion for the nonlinear term in Eqs. \eqref{eq:Mod1a}--\eqref{eq:Mod1d}, we obtain 
\begin{align}
   \label{eq:lina} v_t-v_{xx}+v&=0, \ t\geq 0, x\in \Omega, \\
  \label{eq:linb} v(\infty, t)&=0, \ t\geq 0,\\
  \label{eq:linc} v_x(0, t)&=Qv(0, t-\tau), t\geq0,
\end{align}
where $Q=\frac{m \alpha c^{m-1}}{(1+c^{m})^2}$.

We use the principle of linearized stability to analyze the stability properties of the steady-state solution \cite{guidotti1997,hui2022,li2024}. The corresponding eigenvalue problem is formulated \cite{andrev2007,memory1989} by assuming a solution of the form $v(x,t)=\exp{(\lambda t)}\phi(x)$, which leads to the following system 
\begin{align}
    \label{eq:eiga} \phi_{xx}&=(1+\lambda)\phi \quad x\in \Omega, \\
    \label{eq:eigb} \phi(\infty)&=0, \\
   \label{eq:eigc} \phi_x(0)&=Q\exp{(-\lambda \tau)} \phi(0).
\end{align}
To compute the eigenvalues of Eqs. \eqref{eq:eiga}--\eqref{eq:eigc}, we use 
\begin{equation} \label{eq:eigfun}
    \phi(x)=A\exp{(-\rho x)}+B\exp{(\rho x)},
\end{equation}
which solves Eq. \eqref{eq:eiga}. Where $A,B,\rho \in \mathbb{C}$ and $\lambda=\rho^2-1$. \\
By applying Eqs. \eqref{eq:eigb}--\eqref{eq:eigc} in Eq. \eqref{eq:eigfun} , we get 
\begin{align}
   \label{eq:Fp} F_p(\rho, \tau)=\rho+Q\exp{(-(\rho^2-1)\tau)}&=0 \quad \text{if} \quad Re(\rho)>0,\\
   \label{eq:Fn} F_n(\rho, \tau)=\rho-Q\exp{(-(\rho^2-1)\tau)}&=0 \quad  \text{if} \quad Re(\rho)<0,
\end{align}
where $Re$ denotes the real part. Here, we demonstrate the case $Re(\rho)>0$, and the other case follows the same steps. To analyze the zeros of the function $F_p(\rho, \tau)=0$, we decompose it into its real and imaginary components. By expressing $\rho$ in terms of its real and imaginary parts, $\rho=a+ib$, the function $F_p(\rho, \tau)=0$ can be rewritten accordingly. Solving this system allows us to locate the zeros of $F_p(\rho, \tau)=0$ in the complex plane. We have
\begin{align}
  \label{eq:repa}  a+Q\exp{(-(a^2-b^2-1)\tau)}\cos{(2ab\tau)}&=0,\\
   \label{eq:impa} b-Q\exp{(-(a^2-b^2-1)\tau)}\sin{(2ab\tau)}&=0.
\end{align}
If $0\leq Q<1$, from Eq. \eqref{eq:repa} and Eq. \eqref{eq:impa} we have 
\begin{align}
    a^2-b^2-1&=Q^2\exp{(-2(a^2-b^2-1)\tau)}\cos{(4ab\tau)}-1, \\
    a^2-b^2-1&<\exp{(-2(a^2-b^2-1)\tau)}-1.
\end{align}
 This implies that  $a^2-b^2-1<0$ for any $\tau>0$. Since $Re(\lambda)<0$, the steady state is locally stable. Consequently, a Hopf bifurcation is anticipated when $Q\geq 1$. 

Next, we establish the eigenvalues of Eqs. \eqref{eq:eiga}--\eqref{eq:eigc} cross the imaginary axis for some $\tau>0$, which is a necessary condition for the occurrence of a Hopf bifurcation at fixed values of $m$ and $\alpha$. Specifically, Eqs. \eqref{eq:eiga}--\eqref{eq:eigc} possesses a purely imaginary eigenvalue $\lambda=\pm i\mu \ (\mu\neq 0)$ for certain values of $\tau>0$ if and only if the following condition is satisfied
\begin{equation}
    \label{eq:hocon} Re(\lambda) = a^2-b^2-1=0.
\end{equation}
From Eqs. \eqref{eq:repa},  \eqref{eq:impa} and \eqref{eq:hocon}, we get
\begin{align}
   \label{eq:hosy} a^2+b^2=Q^2, \quad \text{and} \quad \frac{-b}{a}=\tan{(2ab\tau)}.
\end{align}
In the case where  $Re(\rho)<0$, the roots of $F_n(\rho, \tau)=0$ also satisfy   \eqref{eq:hosy}. For $Q>1$, the points of intersection of the curves $a^2+b^2=Q^2$ and $a^2-b^2=1$ are given by  $a=\pm\sqrt{\frac{Q^2+1}{2}}, b=\pm \sqrt{\frac{Q^2-1}{2}}$. It is straightforward to verify that the following equation
\begin{equation}\label{eq:taueq}
    -\frac{\sqrt{Q^2-1}}{\sqrt{Q^2+1}}=\tan{(\tau \sqrt{Q^4-1} )}
\end{equation}
has a root in the interval  $ \left(\frac{\pi}{2\sqrt{Q^4-1}},  \frac{\pi}{\sqrt{Q^4-1}}\right)$, where $\sqrt{Q^4-1}$ is the positive imaginary part of the eigenvalues.  
With this, we are now ready to state the following theorem.
\begin{theorem}\label{th:hobi}
    \begin{enumerate}
        \item Assume $Q<1$, then Eqs. \eqref{eq:eiga}--\eqref{eq:eigb} has only  eigenvalues with negative real part for any $\tau>0$.
        \item Assume $Q>1$, then there exist unique $\tau_0 \in \left(\frac{\pi}{2\sqrt{Q^4-1}},  \frac{\pi}{\sqrt{Q^4-1}}\right)$ such that 
        \begin{equation*}
            \bigl\{\lambda \in \mathbb{C}:\lambda=\rho^2-1, \rho\pm Q\exp{(-(\rho^2-1)\tau_0)}=0 \bigr\}\bigcap i\mathbb{R}=\bigl\{-i\sqrt{Q^4-1}, +i\sqrt{Q^4-1}\bigr\}.
        \end{equation*}
    \end{enumerate}
\end{theorem}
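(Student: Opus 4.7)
The strategy is to analyze the two characteristic equations $F_p(\rho,\tau)=0$ and $F_n(\rho,\tau)=0$ from Eqs.~\eqref{eq:Fp}--\eqref{eq:Fn} through their real/imaginary decomposition. Writing $\rho=a+ib$, the real and imaginary parts of $F_p$ are Eqs.~\eqref{eq:repa}--\eqref{eq:impa}; squaring and adding them yields the key scalar identity
\begin{equation*}
a^{2}+b^{2} \;=\; Q^{2}\exp\bigl(-2(a^{2}-b^{2}-1)\tau\bigr),
\end{equation*}
and the same identity holds for $F_n$ because the sign changes between the two branches disappear under squaring. Since $Re(\lambda)=a^{2}-b^{2}-1$, both parts of the theorem reduce to studying this identity together with one of the original phase equations.

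\emph{Part (1).} For $Q<1$, suppose for contradiction that $a^{2}-b^{2}-1\geq 0$. Then $\exp(-2(a^{2}-b^{2}-1)\tau)\leq 1$, so $a^{2}+b^{2}\leq Q^{2}<1$; but $a^{2}\geq 1+b^{2}\geq 1$ would force $a^{2}+b^{2}\geq 1$, a contradiction. Hence $Re(\lambda)<0$ for every eigenvalue, regardless of $\tau>0$.

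\emph{Part (2).} An imaginary eigenvalue $\lambda=i\mu$ with $\mu\neq 0$ requires $\rho^{2}=1+i\mu$, equivalently $a^{2}-b^{2}=1$ and $2ab=\mu$. Substituting $a^{2}-b^{2}-1=0$ into the squared identity gives $a^{2}+b^{2}=Q^{2}$, and combining with $a^{2}-b^{2}=1$ yields $a^{2}=(Q^{2}+1)/2$ and $b^{2}=(Q^{2}-1)/2$, both positive precisely when $Q>1$. Hence $\mu^{2}=4a^{2}b^{2}=Q^{4}-1$, so the only candidate imaginary eigenvalues are $\pm i\sqrt{Q^{4}-1}$. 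Restricting to the $F_p$ branch with $a,b>0$ (so $\mu=+\sqrt{Q^{4}-1}$), dividing Eq.~\eqref{eq:impa} by Eq.~\eqref{eq:repa} produces exactly the transcendental equation Eq.~\eqref{eq:taueq}. Because $\tan(\cdot)$ is continuous and strictly increasing from $-\infty$ to $0$ on $(\pi/2,\pi)$ and the right side $-\sqrt{Q^{2}-1}/\sqrt{Q^{2}+1}$ lies in $(-1,0)$, there is a unique $\theta_{0}\in(\pi/2,\pi)$ solving it, giving $\tau_{0}:=\theta_{0}/\sqrt{Q^{4}-1}$ in the stated interval. The conjugate eigenvalue $-i\sqrt{Q^{4}-1}$ arises by complex-conjugation symmetry, and the $F_n$ branch (related via $\rho\mapsto-\rho$) yields the same $\tau_{0}$.

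\emph{Anticipated obstacle.} The delicate step is the sign bookkeeping needed to ensure Eqs.~\eqref{eq:repa}--\eqref{eq:impa} hold individually and not merely as a ratio. With $a,b>0$, Eq.~\eqref{eq:repa} demands $\cos(2ab\tau_{0})=-a/Q<0$ and Eq.~\eqref{eq:impa} demands $\sin(2ab\tau_{0})=b/Q>0$, so the angle $\sqrt{Q^{4}-1}\,\tau_{0}$ must lie in the second quadrant. This forces the selection of $(\pi/2,\pi)$ as the correct branch, ruling out later branches $(5\pi/2,3\pi),\ldots$ where $\tan$ revisits the same value. The quadrant analysis simultaneously delivers uniqueness of $\tau_{0}$ within the stated interval and confirms that the intersection of the eigenvalue set with $i\mathbb{R}$ at $\tau=\tau_{0}$ is exactly $\{\pm i\sqrt{Q^{4}-1}\}$, since any other $\mu$ on the imaginary axis would violate the structural constraint $\mu^{2}=Q^{4}-1$ derived above.
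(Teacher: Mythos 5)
Your proposal is correct and follows essentially the same route as the paper: the paper's proof of Theorem~\ref{th:hobi} is precisely the preceding discussion of Eqs.~\eqref{eq:repa}--\eqref{eq:impa}, the circle $a^2+b^2=Q^2$ intersected with $a^2-b^2=1$, and the tangent equation \eqref{eq:taueq}. The only differences are minor refinements on your side: in Part (1) you add the squared equations to obtain $a^{2}+b^{2}=Q^{2}\exp(-2(a^{2}-b^{2}-1)\tau)$ rather than subtracting them as the paper does (which avoids the $\cos(4ab\tau)$ factor), and in Part (2) you make explicit the second-quadrant sign check ($\cos(2ab\tau_0)=-a/Q<0$, $\sin(2ab\tau_0)=b/Q>0$) that the paper leaves as ``straightforward to verify.''
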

\begin{proof}
    Proof is straightforward from the above discussion.
\end{proof}

\begin{remark}
    \begin{enumerate}
        \item There exist countably many values of $\tau>\tau_0$ that satisfy the second part of the above theorem.
        \item For $0<\tau<\tau_0$, Eqs. \eqref{eq:eiga}--\eqref{eq:eigc} has no purely imaginary eigenvalues.
    \end{enumerate}
\end{remark}
Along with the above result, we need to check that the pair of complex conjugate eigenvalues crosses the imaginary with positive speed at $\tau_0$.
\begin{equation}
  \text{i.e.} \quad  \frac{\mathrm{d}}{\mathrm{d}\tau}Re(\lambda(\tau_0))>0.
\end{equation}
To verify the inequality above, we first note that
\begin{align*}
    \frac{\mathrm{d}\lambda}{\mathrm{d}\tau}&=2\rho \frac{\mathrm{d}\rho}{\mathrm{d}\tau}
\end{align*}
and, 
\begin{equation}
   \label{eq:derre} \frac{\mathrm{d}}{\mathrm{d}\tau}Re(\lambda(\tau_0))=2Re\left(\rho(\tau_0)\frac{\mathrm{d}\rho(\tau_0)}{\mathrm{d}\tau}\right)=2a(\tau_0)Re\left(\frac{\mathrm{d}\rho(\tau_0)}{\mathrm{d}\tau} \right)-2b(\tau_0)Im\left(\frac{\mathrm{d}\rho(\tau_0)}{\mathrm{d}\tau}\right)
\end{equation}
Where $Im$ denotes the imaginary part. The implicit function $F_p(\rho,\tau)=0$ satisfies the conditions of the Implicit Function Theorem at the point $(\rho(\tau_0), \tau_0)$, where $\rho(\tau_0)=a(\tau_0)\pm ib(\tau_0)=\frac{\sqrt{Q^2+1}}{2}\pm i\frac{\sqrt{Q^2-1}}{2}$. The derivative formula of the implicit function gives \cite{guidotti1997}
\begin{equation}
   \label{eq:derim} \frac{\mathrm{d}\rho (\tau_0)}{\mathrm{d}\tau}=-\left(\frac{\partial F_p(\rho(\tau_0),\tau_0)}{\partial \rho}\right)^{-1}\left(\frac{\partial F_p(\rho(\tau_0),\tau_0)}{\partial \tau}\right).
\end{equation}
The following calculations are straightforward \begin{align}
  \label{eq:im1}  \frac{\partial F_p(\rho_0,\tau_0)}{\partial \tau}&=-2Qa_0b_0\biggl(\sin{(2a_0b_0\tau_0)+i\cos(2a_0b_0\tau_0)}\biggr)\\
     \nonumber \frac{\partial F_p(\rho_0,\tau_0)}{\partial \rho}&=\biggl(1-2Q\tau_0\bigl(a_0\cos{(2a_0b_0\tau_0)}+b_0\sin{(2a_0b_0\tau_0)}\bigr)\biggr)\\
     \label{eq:im2}&-2iQ\tau_0\biggl(b_0\cos{(2a_0b_0\tau_0)}-a_0\sin{(2a_0b_0\tau_0)}\biggr)
\end{align} 
By substituting Eq. \eqref{eq:im1} and \eqref{eq:im2} into Eq. \eqref{eq:derim} and then applying Eq. \eqref{eq:derre}, we obtain 
\begin{equation}
    \frac{\mathrm{d}}{\mathrm{d}\tau}Re(\lambda(\tau_0))=\frac{4Qa_0b_0}{D_1^2+D_2^2}\left(a_0\sin{(2a_0b_0\tau_0)-b_0\cos{(2a_0b_0\tau_0)}} \right),
\end{equation}
where
\begin{align*}
    D_1&=\bigl(1-2Q\tau_0\bigl(a_0\cos{(2a_0b_0\tau_0)}+b_0\sin{(2a_0b_0\tau_0)}\bigr)\bigr),\\
    D_2&=2Q\tau_0\bigl(b_0\cos{(2a_0b_0\tau_0)}-a_0\sin{(2a_0b_0\tau_0)}\bigr).
\end{align*}
For $\tau_0 \in \left(\frac{\pi}{2\sqrt{Q^4-1}},  \frac{\pi}{\sqrt{Q^4-1}}\right)$, $a_0=\sqrt{\frac{Q^2+1}{2}}$ and $b_0=\pm \sqrt{\frac{Q^2-1}{2}}$, we get 
\begin{equation*}
    a_0b_0\left(a_0\sin{(2a_0b_0\tau_0)-b_0\cos{(2a_0b_0\tau_0)}} \right)>0.
\end{equation*}
It implies 
\begin{equation*}
    \frac{\mathrm{d}}{\mathrm{d}\tau}Re(\lambda(\tau_0))>0.
\end{equation*}
From Theorem \ref{th:hobi} and the discussion above, we conclude that a Hopf bifurcation point $\tau_0$ exists if $Q>1$. In the next section, we illustrate these results with a numerical example.  

\subsection{Numerical simulations}
This section provides numerical simulations of Eqs. \eqref{eq:Mod1a}--\eqref{eq:Mod1e} to validate the theoretical results on Hopf bifurcations. To illustrate the different dynamical behaviors predicted by the analysis, we consider four examples, as summarized in Table \ref{tb:nuex}. Examples 1 and 2 correspond to the case $Q<1$, where the steady state solution is expected to be stable, while Examples 3 and 4 correspond to the case $Q>1$, where a Hopf bifurcation occurs, leading to sustained oscillations when time delay exceeds the threshold.

For Examples 1 and 2 $(Q<1)$, numerical solutions were computed for time delays $\tau=5, \tau=10$, and $\tau=15$. The results presented in Figs. \ref{fig:eg1} and \ref{fig:eg2}, corresponding to Examples 1 and 2, validate the stability of the steady state solution. In both examples, the simulations demonstrate that the initial oscillations in the system decay over time, indicating that the system eventually converges to the steady state solution. This behavior aligns with the theoretical prediction that the steady state is stable for $Q<1$ regardless of the delay parameter.

For Examples 3 and 4 $(Q>1)$, the numerical results are presented in Fig. \ref{fig:eg3}  and  \ref{fig:eg4}, respectively, demonstrating the behavior of the system in the presence of a Hopf bifurcation. According to Theorem \ref{th:hobi} and Eq. \eqref{eq:taueq}, the critical time delay for Example 3 is calculated to be approximately $\tau_0\approx1.1$, within the interval $(0.67723, 1.3448)$. Similarly, for Example 4, the Hopf bifurcation occurs at $\tau_0\approx0.12$, within the interval $(0.07601, 0.15203)$. In both examples, the numerical simulations reveal that when the time delay $\tau$ exceeds the critical value $\tau_0$, the solutions exhibit sustained oscillations. Figs. \ref{fig:eg1}--\ref{fig:eg4} represent the protein concentration at the cell membrane, whereas 3D plots (plots of the solution over space and time) are shown in Appendix~\ref{ap:ap3}. We handle the numerics with a discretization as in \cite{miller2024}.  

\vspace{1cm}
\begin{table}[h]
\caption{Numerical examples}
\label{tb:nuex}
\centering
\setlength{\tabcolsep}{13pt}
\begin{tabular}{|p{2cm}|p{1cm}|p{1cm}|p{1cm}|p{2.5cm}|p{4cm}|}
\hline
\textbf{Example} & $m$ & $\alpha$ & $c$ & $Q = \dfrac{m\alpha c^{m-1}}{(1 + c^m)^2}$ & $\tau_0$ \\
\hline
Example 1 & 4 & 0.4 & 0.3909 & 0.0912 & No Hopf bifurcation \\
\hline
Example 2 & 2 & 1.6 & 0.8915 & 0.8856 & No Hopf bifurcation \\
\hline
Example 3 & 4 & 1.5 & 0.9022 & 1.5941 & $\approx 1.1$ \\
\hline
Example 4 & 6 & 5 & 1.2097 & 4.5484 & $\approx 0.12$ \\
\hline
\end{tabular}
\end{table}

\begin{figure}[h]
 \centering  \includegraphics[width=\textwidth]{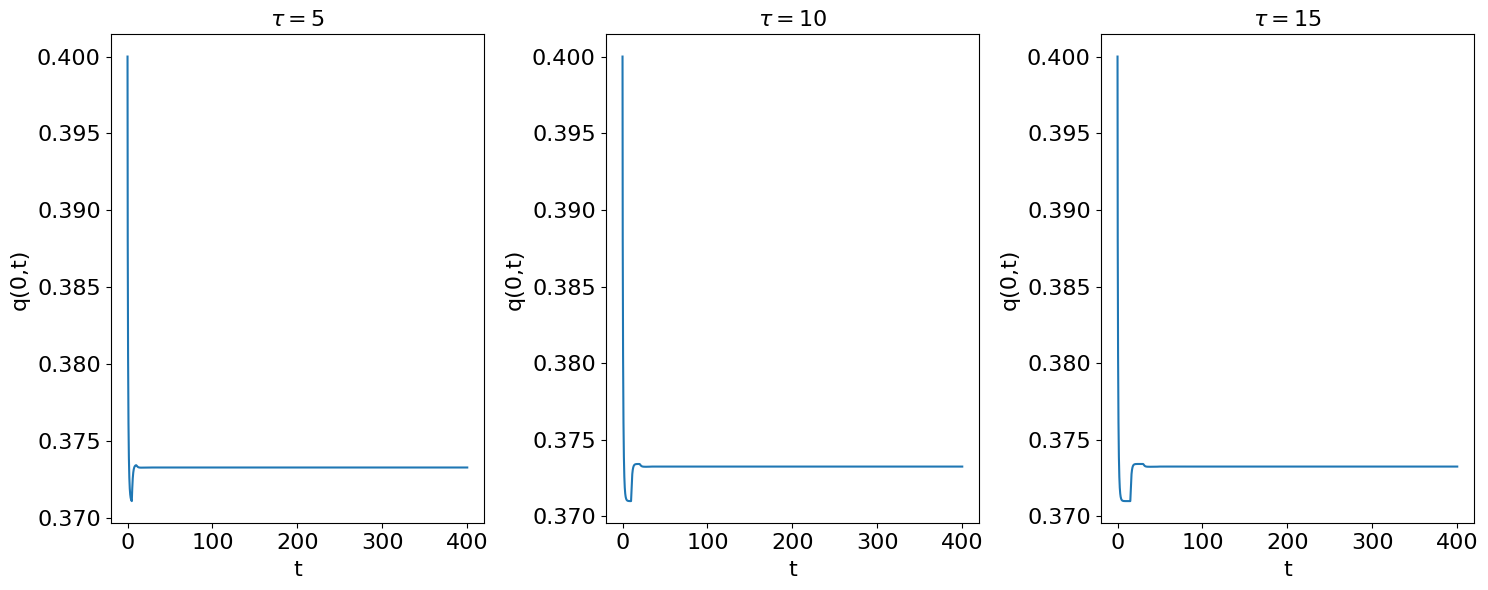}
  \caption{Example 1 (Protein concentration $q(x,t)$ at the cell membrane for $m=4$ and $\alpha=0.4$ )} 
  \label{fig:eg1}  
\end{figure}  

\begin{figure}[h!]
 \centering
\includegraphics[width=\textwidth]{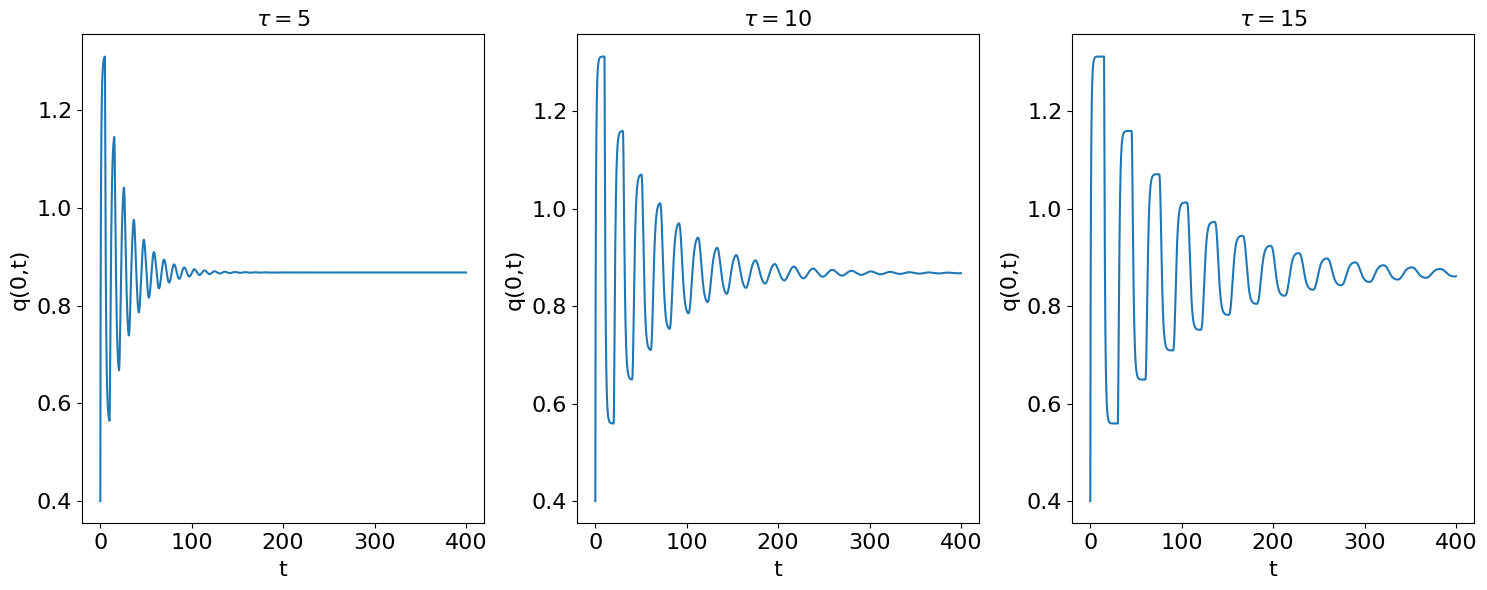}
  \caption{Example 2 (Protein concentration $q(x,t)$ at the cell membrane for $m=2$ and $\alpha=1.6$ )} 
  \label{fig:eg2} 
\end{figure} 

\begin{figure}[h!]
 \centering
\includegraphics[width=\textwidth]{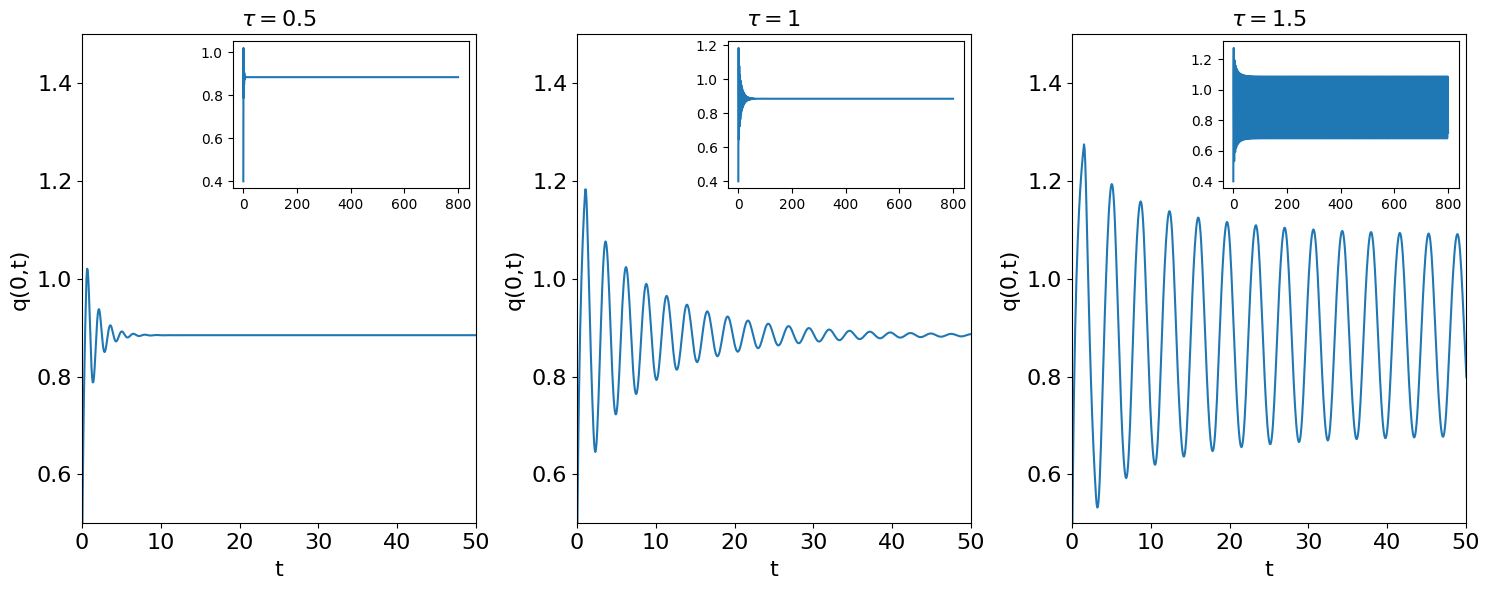}
  \caption{Example 3 (Protein concentration $q(x,t)$ at the cell membrane for $m=4$ and $\alpha=1.5$ )} 
  \label{fig:eg3} 
\end{figure} 

\begin{figure}[h!]
 \centering
\includegraphics[width=\textwidth]{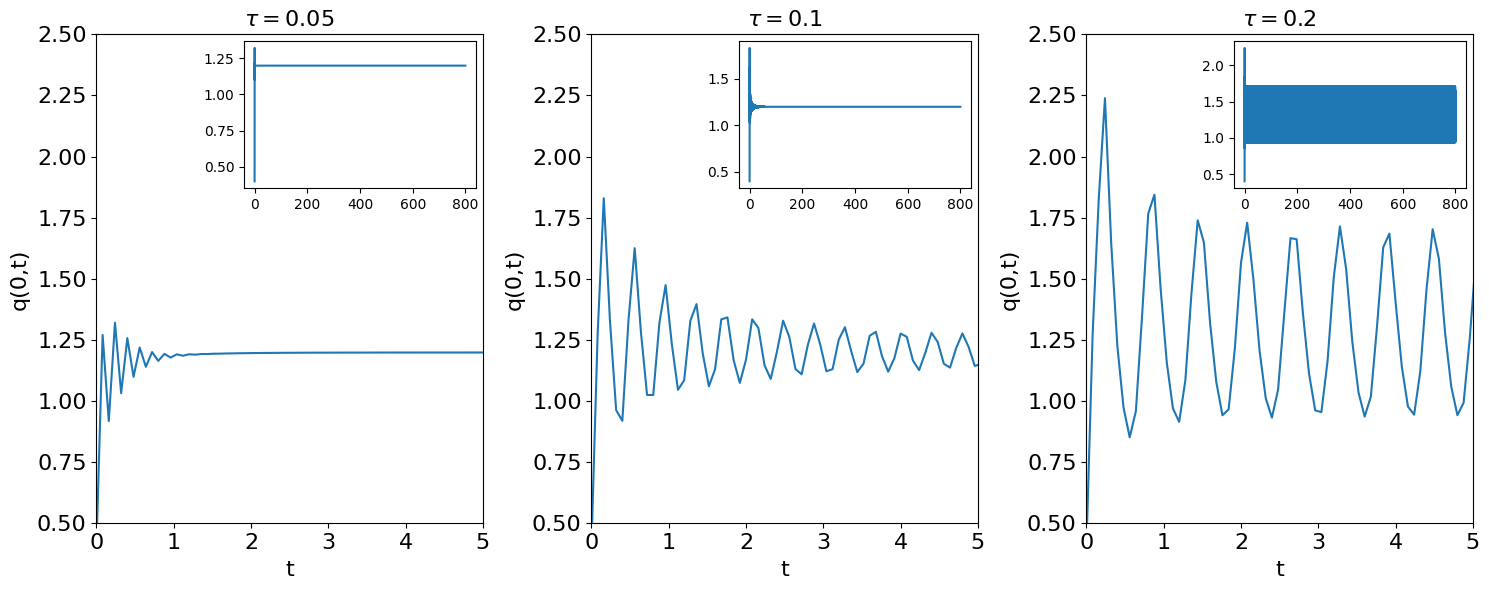}
  \caption{Example 4 (Protein concentration $q(x,t)$ at the cell membrane for $m=6$ and $\alpha=5$ )} 
  \label{fig:eg4}
\end{figure}

\newpage
In the following remark, we discuss the sensitivity of the parameter to stability. The parameter $m$ controls how rapidly the cell switches from maximum flux to zero flux, and $\alpha=\frac{A}{P_0\sqrt{Dk}}$ represents the dimensionless maximum flux parameter. 
\begin{remark}
    \begin{enumerate}
        \item If $\alpha <1$ and $m\gg1$, then $c\sim \alpha$ and $Q\sim \frac{\alpha m \alpha ^{m-1}}{(1+\alpha^m)^2}$ $(Q\rightarrow 0 \quad \text{as} \quad m\rightarrow \infty)$. This indicates that when the boundary flux $\alpha$ is sufficiently small $(\alpha<1)$, an increase in the nonlinearity parameter $m$ does not disrupt the stability of the steady state solution for any $\tau$.
        \item If $\alpha >1$ and $m\gg1$, then $c\sim 1+\frac{\log{(\alpha-1)}}{m}$ and $Q\sim \frac{\alpha-1}{\alpha}m$. This suggests that increasing the nonlinearity parameter $m$ induces a Hopf bifurcation for some  $\tau>0$ when $\alpha>1$.
    \end{enumerate}  
\end{remark}    

\section{Conclusion and future work}
In this study, we investigated a parabolic partial differential equation with a nonlinear, time-delayed boundary condition, focusing on the existence, stability, and oscillatory behavior of its solutions. We demonstrated the existence of a unique positive, bounded solution under a suitable hypothesis, applicable both in the presence and absence of delay. The local asymptotic stability of the steady-state solution was rigorously analyzed through the associated eigenvalue problem, revealing the dependence of solution behavior on the nonlinear parameter $m$, boundary flux $\alpha$, and time delay $\tau$.  For fixed $m$ and $\alpha$, the time delay $\tau$ was treated as a bifurcation parameter to examine the stability of the steady state solution. A critical insight from this analysis is the occurrence of a Hopf bifurcation at a critical delay $\tau_0 \in \left(\frac{\pi}{2\sqrt{Q^4-1}}, \frac{\pi}{\sqrt{Q^4-1}}  \right)$ when $Q>1$. This indicates the transition from stability to oscillatory behavior as $\tau$ crosses the bifurcation threshold. On the other hand, the steady-state solution remains stable for any $\tau>0$ when $Q<1$. We also validated the theoretical results on Hopf bifurcation through numerical examples, illustrating the critical delay values and the emergence of periodic solutions predicted by the analysis.

The model considered in this study is limited to a single cell producing a single protein species, resulting in a scalar PDE. In future work, we could extend our focus to cells producing multiple protein species, which will lead to a system of coupled PDEs rather than a single equation.

As an initial step, we developed the mathematical theory for the model by representing the cell as an interval on the real line, thereby simplifying the domain (interstitial fluid) to a half-space. However, a more realistic approach would involve modeling the cell as a three-dimensional object, with the computational domain defined as the exterior of the cell. Future work could focus on the analysis of multiple cells within a bounded domain (brain) and the development of the corresponding mathematical theory for the resulting system.

Here, we assume the initial condition is continuous, which helps us to prove the existence of a classical solution to our model. In the future, one could consider weaker initial conditions in $L^p$ spaces and establish the existence of solutions in weaker function spaces.

Since our primary objective is to demonstrate the existence of a Hopf bifurcation in the model induced by time delay, we have focused on linearized stability analysis rather than global methods such as Lyapunov functions or comparison principles. One could consider studying the global stability of the model in the future.

\section*{Acknowledgments}
This project has received support from NSF Award DMS \#2316952.

\section*{Appendix}
\section{The Phragman-Lindelof principle\label{ap:ap1}} 

Let $q(x,t)$ satisfy 
\begin{align*}
    q_t-q_{xx}+q&\geq0 \quad \text{in} \quad \Omega_T, \\
    a\frac{\partial q}{\partial x}+bq&\geq 0 \quad \text{in} \quad \partial \Omega \times [0,T],\\
    q(x,0)&\geq 0 \quad \text{in} \quad \Omega,
\end{align*}
where $a, b\geq 0$ and $a+b>0$. If there exists a constant $\delta>0$ such that 
\begin{equation*}
    \limsup_{R\rightarrow{\infty}}\bigl[\exp{(-\delta R^2)}\left(\min\bigl\{q(x,t):0\leq t\leq T, x=R\bigr\}\right)\bigr]\geq 0
\end{equation*}
then $q(x,t)\geq 0$ in $\overline{\Omega}_T$ (See  Theorem 1.3 of Chapter 7 of \cite{pao2012}).

\section{Proof of Lemma \ref{le:monot}}\label{ap:ap2}
\begin{proof}
    \textbf{Step 1.} Here, we prove that the sequence $\{\underline{q}^{(k)}\}$ is nondecreasing and serves as an lower solution. Consider $k=1$ with $\check{q}=\underline{q}^{(0)}$. From Theorem \ref{th:Main}, there exists a solution $\underline{q}^{(1)}$ to Eqs. \eqref{eq:Iterproa}--\eqref{eq:Iterprod} and it satisfies $|\underline{q}^{(1)}(x,t)|\leq M_1\exp{(bx^2)}$ as $x\rightarrow{\infty}$ for some $M_1\geq0$ and $b\geq 0$.  Let $w=\underline{q}^{(1)}-\underline{q}^{(0)}$. Then,
    \begin{align*}
        w_t-w_{xx}+w=\underline{q}_t^{(1)}-\underline{q}^{(1)}_{xx}+\underline{q}^{(1)}-\left(\underline{q}^{(0)}_t-\underline{q}^{(0)}_{xx}+\underline{q}^{(0)}\right) \quad \text{in} \quad \Omega_T.
    \end{align*}
From definition of lower solution and the iteration process in Eqs. \eqref{eq:Iterproa}--\eqref{eq:Iterprod}, we have 
    \begin{align*}
        w_t-w_{xx}+w&\geq 0 \quad \text{in} \quad \Omega_T,\\
        w_x(0, t)&=\frac{-\alpha}{1+\bigl[\underline{q}^{(0)}(0,t)\bigr]^m}-\underline{q}^{(0)}_x (0, t)\geq 0,\\
        w(x,0)&\geq 0 \quad x\in 
        \Omega.
    \end{align*}
    Since $|\underline{q}^{(1)}(x,t)|\leq M_1\exp{(bx^2)}$ and $|\underline{q}^{(0)}(x,t)|\leq C\exp{(bx^2)}$ as $x\rightarrow{\infty}$ for some constants $M_1, C$ and $b$, we have for some $\delta>b$
    \begin{equation*}
\limsup_{R\rightarrow{\infty}}\bigl[\exp{(-\delta R^2)}\left(\min\bigl\{w(x,t): x=R\bigr\}\right)\bigr]\geq \lim_{R\rightarrow{\infty}}\bigl[-(M_1+C)\exp{((b-\delta)R^2)}\bigr] =0.  
    \end{equation*}
    From the Phragman-Lindelof principle, we conclude that $\underline{q}^{(1)}\geq \underline{q}^{(0)}$ in $\Omega_T$. From Theorem \ref{th:Main}, $\underline{q}^{(k)}$ exists and satisfies the growth condition.
    By method of induction,  the induction hypothesis is 
    \begin{equation*}
        \check{q}=\underline{q}^{(0)}\leq \underline{q}^{(1)}\leq ...\leq \underline{q}^{(k-1)}\leq \underline{q}^{(k)}.
    \end{equation*}
    Let $w=\underline{q}^{(k+1)}-\underline{q}^{(k)}$. Then 
    \begin{align*}
       w_t-w_{xx}+w&= 0 \quad \text{in} \quad \Omega_T,\\
        w_x(0, t)&=\frac{-\alpha}{1+\bigl[\underline{q}^{(k)}(0,t)\bigr]^m}+\frac{\alpha}{1+\bigl[\underline{q}^{(k-1)}(0,t)\bigr]^m}\geq 0,\\
        w(x,0)&=0 \quad x\in 
        \Omega. 
    \end{align*}
    Since $\underline{q}^{(k)}$ and $\underline{q}^{(k+1)}$ satisfy the growth condition, there exists $C_1$ and $\delta >b$ such that 
    \begin{equation*}
\limsup_{R\rightarrow{\infty}}\bigl[\exp{(-\delta R^2)}\left(\min\bigl\{w(x,t): x=R\bigr\}\right)\bigr]\geq \lim_{R\rightarrow{\infty}}\bigl[-C_1\exp{((b-\delta)R^2)}\bigr] =0.  
    \end{equation*}
    Phragman-Lindelof principle concludes that $\underline{q}^{(k)} \leq \underline{q}^{(k+1)}$ in $\Omega_T$. Hence, we proved $\{\underline{q}^{(k)}\}$ is nondecreasing.
    
    Since, 
    \begin{equation*}
        \underline{q}_x^{(k)}(0,t)=\frac{-\alpha}{1+\bigl[\underline{q}^{(k-1)}(0,t)\bigr]^m}\leq \frac{-\alpha}{1+\bigl[\underline{q}^{(k)}(0,t)\bigr]^m},
    \end{equation*}
      $\underline{q}^{(k)}$ is the lower solution to Eqs. \eqref{eq:ModNodela}--\eqref{eq:ModNodeld} for each $k$.\\
    \textbf{Step 2.} Here, we prove that the sequence $\{\overline{q}^{(k)}\}$ is nonincreasing and serves as an upper solution. Using similar reasoning as in Step 1, we obtain 
    \begin{equation*}
        \overline{q}^{(k+1)}\leq \overline{q}^{(k)}\leq ...\leq \overline{q}^{(1)}\leq \overline{q}^{(0)}=\hat{q}.
    \end{equation*}
    And, $\overline{q}^{(k)}$ is an upper solution to Eqs. \eqref{eq:ModNodela}--\eqref{eq:ModNodeld} for each $k$.\\
    \textbf{Step 3.} Here, we prove that $\underline{q}^{(k)}\leq \overline{q}^{(k)}$ for each $k$. Let $w=\overline{q}^{(k)}-\underline{q}^{(k)}$. Then 
    \begin{align*}
        w_t-w_{xx}+w&\geq0 \quad \text{in} \quad \Omega_T,\\
        w_x(0, t)&= \frac{-\alpha}{1+\bigl[\overline{q}^{(k-1)}(0,t)\bigr]^m}+\frac{\alpha}{1+\bigl[\underline{q}^{(k-1)}(0,t)\bigr]^m},\\
        w(x,0)&\geq0 \quad x\in\Omega.
    \end{align*}
    Since, $\underline{q}^{(k-1)}\leq \underline{q}^{(k)}$ and $\overline{q}^{(k)}\leq \overline{q}^{(k+1)}$, we have 
    \begin{equation*}
        w_x(0, t)= \frac{-\alpha}{1+\bigl[\overline{q}^{(k-1)}(0,t)\bigr]^m}+\frac{\alpha}{1+\bigl[\underline{q}^{(k-1)}(0,t)\bigr]^m}\geq \frac{-\alpha}{1+\bigl[\overline{q}^{(k)}(0,t)\bigr]^m}+\frac{\alpha}{1+\bigl[\underline{q}^{(k)}(0,t)\bigr]^m}.
    \end{equation*}
   The function $g(0,t,q):[0,T]\times \langle\check{q}, \hat{q}\rangle \rightarrow \mathbb{R}$ defined by $g(0,t,q)=\frac{-\alpha}{1+[q(0,t)]^m}$ is Lipschitz continuous with a Lipschitz constant $\Gamma$. Hence, we have 
    \begin{align*}
        w_x(0, t)+\Gamma w(0,t)&\geq \frac{-\alpha}{1+\bigl[\underline{q}^{(k)}\bigr]^m}+\frac{\alpha}{1+\bigl[\overline{q}^{(k)}\bigr]^m}+ \Gamma w(0,t)\geq 0, 
    \end{align*}
    With the Phragman-Lindelof principle, we conclude that $\underline{q}^{(k)}\leq \overline{q}^{(k)}$ for each $k$.\\
    \textbf{Step 4.} Since the sequence $\{\underline{q}^{(k)}\}$ and $\{\overline{q}^{(k)}\}$ are monotonic and bounded, therefore there exist pointwise limits $\underline{q}$ and $\overline{q}$, such that
    \begin{equation*}
        \lim_{k\rightarrow \infty}\overline{q}^{(k)}=\overline{q} \quad \text{and} \quad  \lim_{k\rightarrow \infty}\underline{q}^{(k)}=\underline{q}
    \end{equation*}
\end{proof}

\section{Green's function estimation}\label{ap:ap4}
For fixed $t>s>0$, $\int_0^t G(x, t ; 0,s)  \mathrm{d} s$  is finite. \\
    We have, 
   \nonumber \begin{align}
 \left|\int_0^t G(x, t ; 0, s) \mathrm{d}s\right| &=
 2 \left|\int_0^t \frac{e^{-t}}{\sqrt{4 \pi(t-s)}} \text{exp}\left(\frac{-x^2}{4(t-s)}\right) \mathrm{d} s\right|.
    \end{align}

We use the following inequality, for every $n>0, \quad \exists \  C_n$ such that,

\begin{equation*}
a^n e^{-b a^2} \leq C_n e^{-\frac{b}{2} a^2} \quad \forall a>0. 
\end{equation*}
It implies that,
\begin{align*}
   \frac{1}{\sqrt{4(t-s)}} \exp \left(\frac{-x^2}{4(t-s)}\right) \leq C_1 \text{exp}\left(-\frac{x^2}{8(t-s)}\right) \leq C_1.
\end{align*}
Hence, 
\begin{equation*}
   \Bigg| \int_0^t G(x, t ; 0,s)  \mathrm{d} s \Bigg| \leq 2 C_1t\exp{(-t)}.
\end{equation*}

\section{Calculation for Proposition~\ref{pr:lusol1}}\label{ap:ap5}

Let \begin{equation*}
    \check{q}(x)=c\exp{(-\zeta x-\beta x^{\gamma})}
\end{equation*}
Then, \begin{align}
   \check{q}_t&=0,\\
   \check{q}_x&=\check{q}(-\zeta-\beta \gamma x^{\gamma-1})\\
     \check{q}_{xx}&=-\beta \gamma (\gamma-1)x^{\gamma-2}\check{q}+(\zeta+\beta \gamma x^{\gamma-1})^2\check{q}.
\end{align}
By substituting above equations in Eq. \eqref{eq:ModNodela}, we get 
\begin{equation*}
    \check{q}_t-\check{q}_{xx}+\check{q}=\check{q}\left( \beta \gamma (\gamma-1)x^{\gamma-2}-\zeta^2-\beta^2 \gamma^2x^{2\gamma-2}-2\zeta\beta \gamma x^{\gamma-1} +1\right). 
\end{equation*}
Since, $\gamma \geq 2, \beta >1$ and $\zeta \geq \sqrt{\beta \gamma(\gamma-1)+1}$, we have 
\begin{equation*}
    \check{q}_t-\check{q}_{xx}+\check{q}\leq 0.
\end{equation*}
Since $c+c^{m+1}=\alpha$ and $\zeta\geq 1$, 
\begin{align}
    \zeta c+\zeta c^{m+1}&\geq \alpha, \\
    -\zeta c-\zeta c^{m+1}&\leq -\alpha,\\ 
    -\zeta c&\leq -\frac{\alpha}{1+(c)^m}. 
\end{align}
This implies that, $\check{q}_x(0)\leq -\frac{\alpha}{1+[\check{q}(0)]^m}$. From the above calculation and Hypothesis~\ref{hy:ini} we proved that $\check{q}(x)=c\exp{(-\zeta x-\beta x^{\gamma})}$ is a lower solution to  Eqs. \eqref{eq:ModNodela}--\eqref{eq:ModNodeld}. It is straightforward to see that $\hat{q}=M$ is an upper solution to Eqs. \eqref{eq:ModNodela}--\eqref{eq:ModNodeld}.

\section{3D plots for visualizing numerical examples}\label{ap:ap3} 
We present three-dimensional visualizations for Examples 1--4, depicting simulations over a truncated domain to highlight oscillatory behavior. Figs. \ref{fig:eg1}--\ref{fig:eg4} present numerical simulations over a large time scale, clearly illustrating the sustained or decreasing amplitudes in the solutions as time progresses.
\begin{figure}[h]
 \centering  \includegraphics[width=\textwidth]{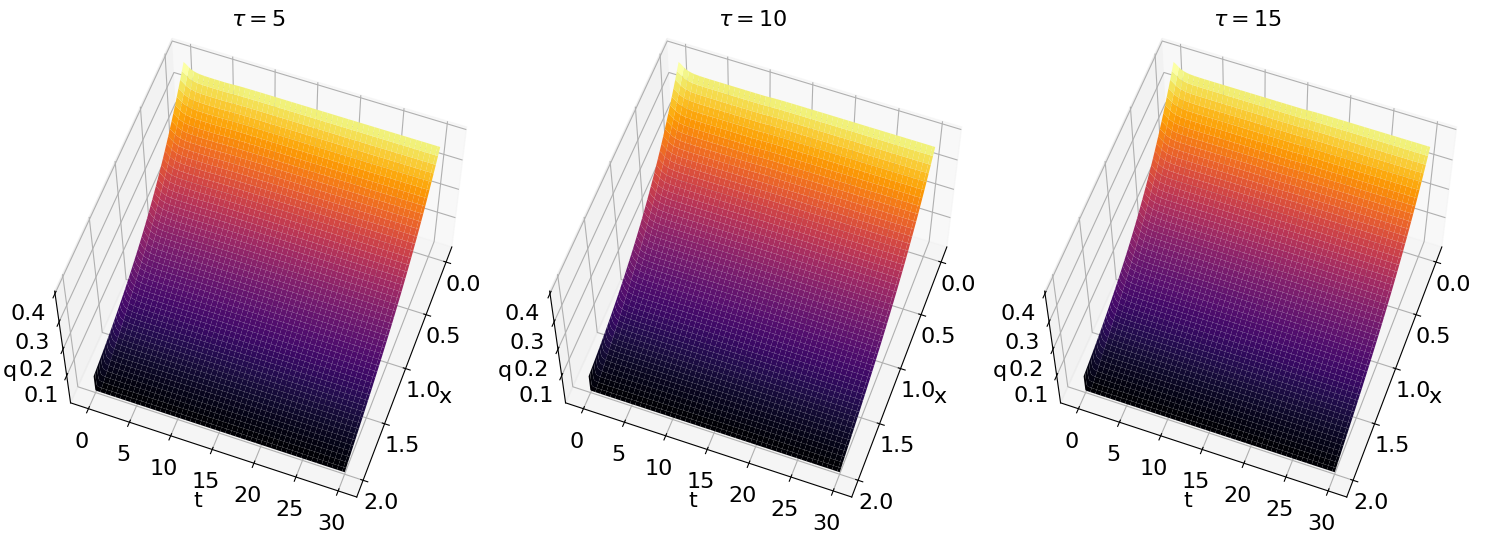}
  \caption{Example 1 (Protein concentration $q(x,t)$  for $m=4$ and $\alpha=0.4$ )} 
  \label{fig:3deg1}  
\end{figure}  
\begin{figure}[h]
 \centering
\includegraphics[width=\textwidth]{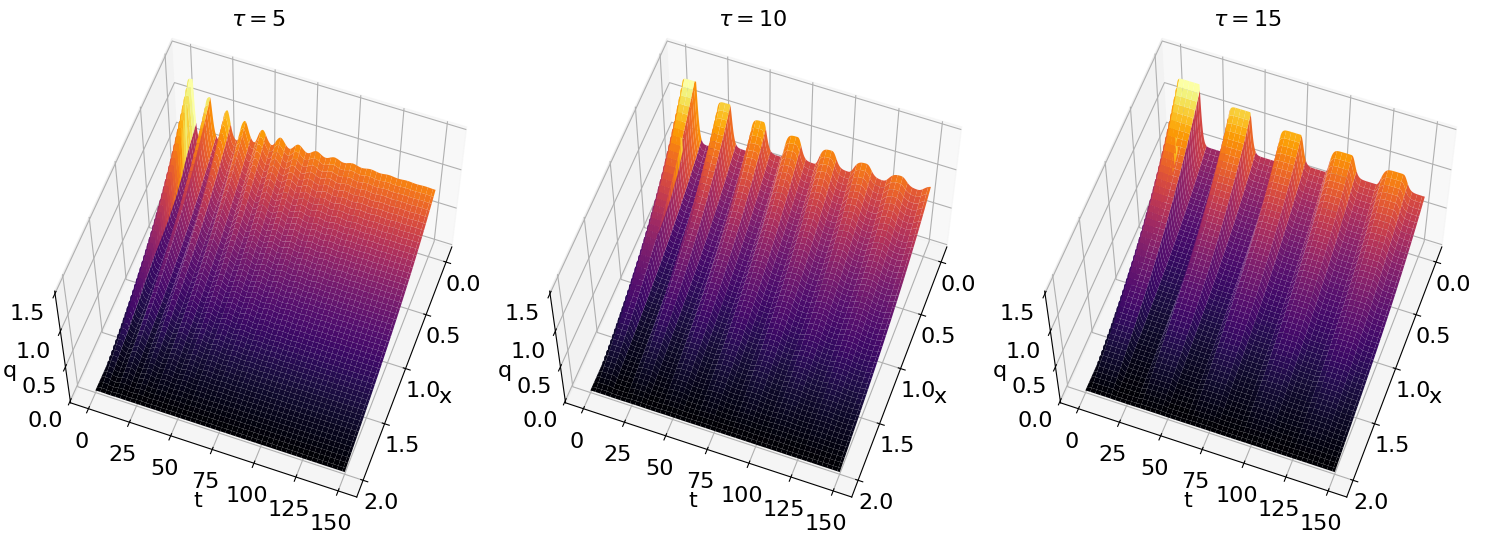}
  \caption{Example 2 (Protein concentration $q(x,t)$  for $m=2$ and $\alpha=1.6$ )} 
  \label{fig:3deg2}  
\end{figure} 
\begin{figure}[h]
 \centering
\includegraphics[width=\textwidth]{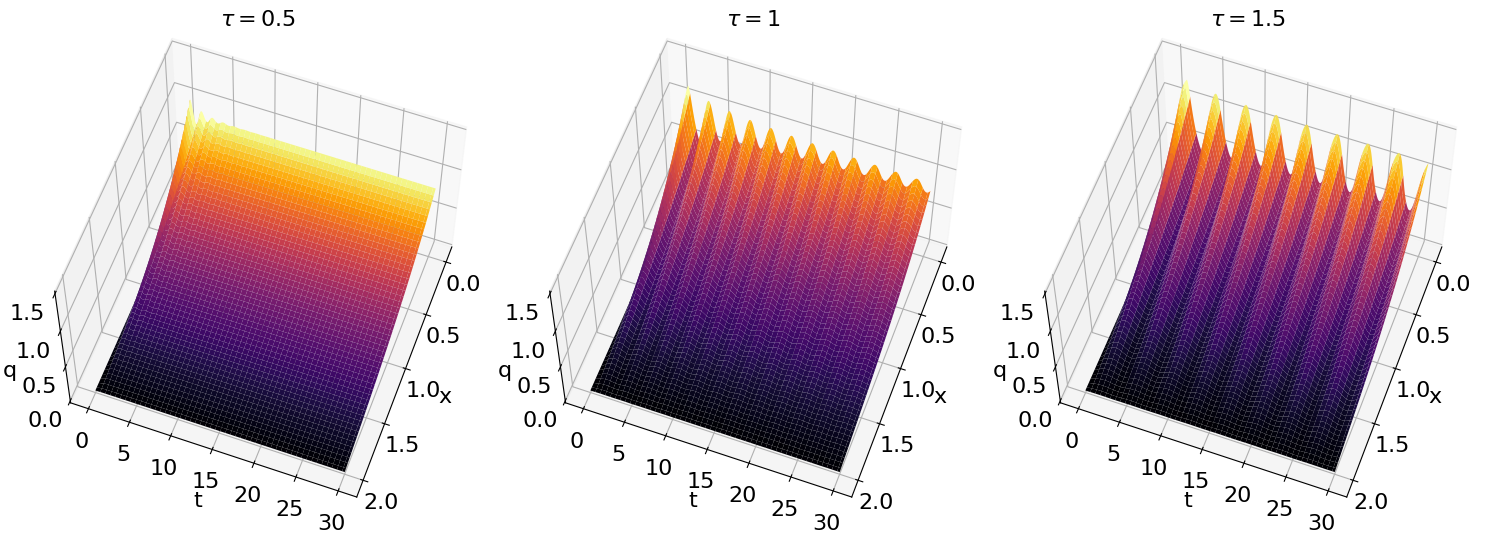}
  \caption{Example 3 (Protein concentration $q(x,t)$  for $m=4$ and $\alpha=1.5$ )} 
  \label{fig:3deg3}  
\end{figure} 
\vspace{2cm}
\begin{figure}[h]
 \centering
\includegraphics[width=\textwidth]{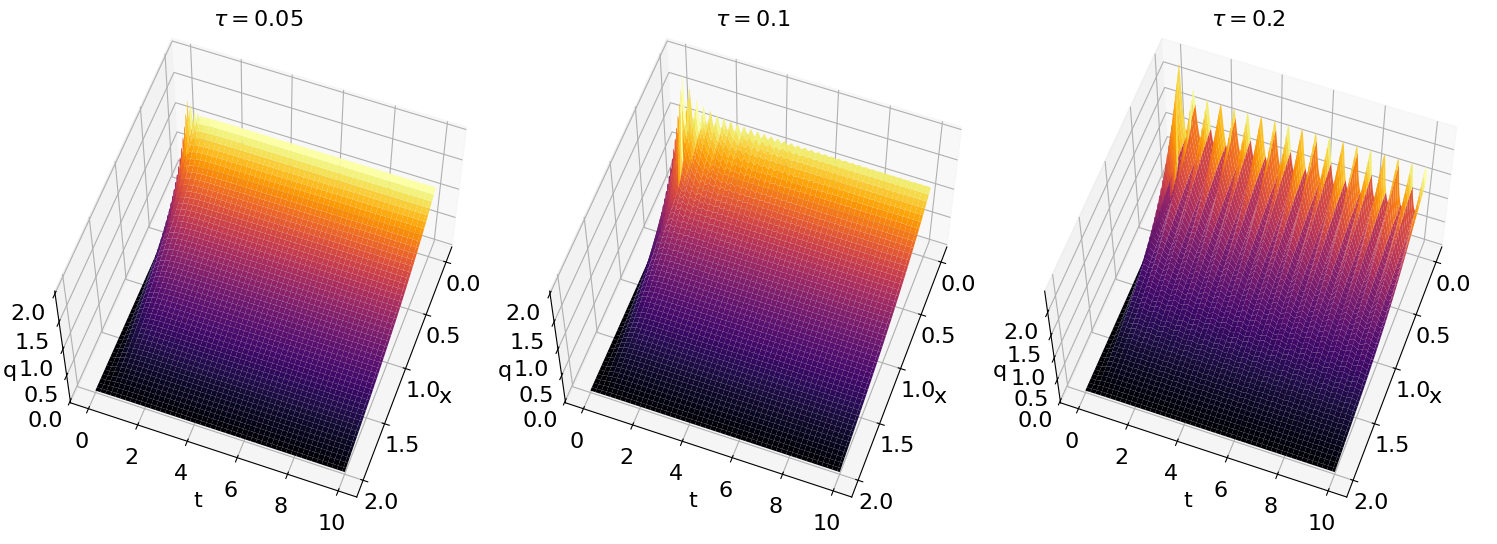}
  \caption{Example 4 (Protein concentration $q(x,t)$  for $m=6$ and $\alpha=5$ )} 
  \label{fig:3deg4} 
\end{figure}
\newpage

\end{document}